\newtheorem{theorem}{Theorem}[section]
\newtheorem{proposition}[theorem]{Proposition}
\newtheorem{lemma}[theorem]{Lemma}
\newtheorem{corollary}[theorem]{Corollary}
\newtheorem{definition}[theorem]{Definition}
\def\bloc{{\cal B}_{L,{\rm loc}}}
\def\cb{{\mathcal B}}
\def\ce{{\mathcal E}}
\def\cw{{\mathcal W}}
\def\bc{{\mathbb C}}
\def\bn{{\mathbb N}}
\def\br{{\mathbb R}}
\def\a{\alpha}
\def\b{\beta}
\def\tr{{\rm Tr}}
\def\L{\Lambda}
\def\G{\Gamma}
\def\ce{\mathcal E}
\def\ffi{\varphi}
\def\<{\langle}
\def\>{\rangle}
\def\1{\mathbf{1}}
\def\kxy{K_{<x,y>}}
\def\kxz{K_{<x,z>}}
\def\cw{\cal W}
\def\cal{\mathcal}
\def\ssub{\subset \subset}
\def\subf{\subseteq_{\rm fin}}
\def\subfc{\subseteq_{\rm fin,c}}
\def\si{\sinh 2\beta}
\def\dis{{\rm dist}}
\def\id{{\bf 1}\!\!{\rm I}}
\begin{document}


\centerline{\Large {\bf Quantum Markov fields on graphs }}

\begin{center}
Luigi Accardi, \\
Centro Vito Volterra,\\
Universit\`a di Roma ``Tor Vergata",\\
Roma I-00133, Italy\\
E-mail: {\tt accardi@volterra.uniroma2.it}
\end{center}
\begin{center}
Hiromichi Ohno, \\
Faculty of Engineering, Shinshu University\\
4-17-1 Wakasato, Nagano 380-8553, Japan\\
E-mail: {\tt  h\_ohno@shinshu-u.ac.jp}
\end{center}

\begin{center}
Farrukh Mukhamedov,\\
Department of Computational \& Theoretical Sciences,\\
Faculty of Science, International Islamic University Malaysia,\\
P.O. Box, 141, 25710, Kuantan, Pahang, Malaysia\\
E-mail: {\tt far75m@yandex.ru, \ farrukh\_m@iiu.edu.my}
\end{center}

\begin{abstract}
We introduce generalized quantum Markov states and
generalized d-Markov chains which extend the
notion quantum Markov chains on spin systems to that on
$C^*$-algebras defined by general graphs. As examples of
generalized d-Markov chains,
 we construct the entangled Markov fields on
tree graphs.
The concrete examples of generalized d-Markov chains on Cayley trees
are also investigated.

\vskip 0.3cm \noindent {\it Mathematics Subject Classification}:
46L53, 60J99, 46L60, 60G50, 82B10, 81Q10, 94A17.\\
{\it Key words}: generalized Markov state; graph; entangled markov
fields; $d$-Markov chains; Caylay tree.
\end{abstract}

\section{  Introduction }\label{intr}

 Markov fields play an important role in classical probability, in physics, in
biological and neurological models and in an increasing number of
technological problems such as image recognition.

It is quite natural to forecast that the quantum analogue of these
models will also play a relevant role.

The papers \cite{[Liebs99]}, \cite{[AcFi01a]},
\cite{[AcFi01b]},\cite{AcLi} are a first attempts to construct a
quantum analogue of classical Markov fields. These papers extend to
fields the notion of {\it quantum Markov state} introduced in
\cite{[AcFr80]} as a sub--class of the {\it quantum Markov chains}
introduced in \cite{[Ac74f]}. As remarked in \cite{[Liebs99]}, the
peculiarity of the former class of states with respect to the latter
consists in the fact that they admit a Umegaki conditional
expectation {\it into} rather than {\it onto} their range.

This small difference allows, when applied to states on infinite
tensor products of $C^*$--algebras, to obtain nontrivial (i.e. non
product) states while maintaining most of the simple algebraic
properties related to classical Markovianity.

The prize one has to pay for this simplification is that the
resulting class of states, although non trivial, has very poor
entanglement properties so that they cannot exhibit some of the
most interesting properties which distinguish the quantum from the
classical world.

On the contrary the quantum Markov chains or, more generally, the
{\it generalized quantum Markov states} in the sense of
\cite{[Ohno03]} may exhibit very strong entanglement properties.
In particular the paper \cite{[Miyad05]} shows that this is indeed
the case for the {\it entangled Markov chains} constructed in
\cite{[AcFi03]}.
A degree of entanglement of entangled Markov chains is
 considered in \cite{[AcMaOh]}.

The above considerations naturally suggest the study of following
two problems:
\begin{enumerate}

\item[(i)]  the extension to fields of the notion of generalized
Markov state (or Markov chain)

\item[(ii)] the extension to fields of the construction of
entangled Markov chains produced in \cite{[AcFi03]}

\end{enumerate}

The present paper is a first step towards the solution of these
problems. We introduce a hierarchy of notions of Markovianity for
states on discrete infinite tensor products of $C^*$--algebras
(Section \ref{dfqmf}) and for each of these notions we construct
some explicit examples. We show that the construction of
\cite{[AcFi03]} can be generalized to trees (Section \ref{tree}).
 It is interesting to notice that, in a different
context and for quite different purposes, the special role of trees
was already emphasized in \cite{[Liebs99]}. Note that in
\cite{fannes} finitely correlated states are constructed as ground
states of VBS-model on Cayley tree. As well as, such shift invariant
$d$-Markov chains can be  also considered as an extension of
$C^*$-finitely correlated states defined in \cite{fannes2} to the
Cayley trees. In the classical case, Markov fields on trees are also
considered in \cite{[Pr]}-\cite{[Za85]}.

A comment on the  notion of generalized quantum Markov state
introduced in Definition \ref{dfgenwkMS} may help understanding
the logic leading to this definition and in particular condition
(\ref{dfgqmf}) which otherwise might, at first sight, seem
artificial.

The point is that, as we know from Dobrushin's seminal work
\cite{[Dobr68a]}, the natural localization for fields on a
discrete set $L$ is given by the finite subsets of $L$ and their
complements. This localization, when restricted to the
$1$--dimensional case, does not lead to the usual probabilistic
localization but, in a certain sense to its {\it dual} (or {\it
time reversal}), corresponding to the conditioning of the past on
the future rather than conversely.  This leads to different
structures of the Markov chains in the two cases, a fact already
noted in \cite{[Ac74f]} where these two types were called {\it
Markov chains} and {\it inverse Markov chains} respectively.

In particular the role played by the time zero algebra in the
usual Markov processes is played by the algebra at infinity in the
multi--dimensional case.

But, while the time zero algebra has a meaning independent of the
state, the algebra at infinity can be (meaningfully) defined only
in the GNS representation of the given state. Therefore, if one
wants to give a constructive and local definition of a state one
cannot make use of a global notion such as the algebra at
infinity.

In the ergodic cases, corresponding physically to the {\it pure
phases} in Dobrushin's theory, one expects that the algebra at
infinity is trivial and that the sequence of conditional
expectations appearing in (\ref{dfgqmf}) converges weakly to a
single state (asymptotic independence of the boundary) so that the
resulting state is in fact independent of the sequence of states
$(\hat\varphi_{\Lambda_n^c})$ which plays the role of the single
``state" $\hat\varphi_{L^c} = \hat\varphi_{\infty}$, not available
at a $C^*$--level.

Let us briefly mention about the organization of the paper. In
Sections \ref{graphs} and \ref{bundles}, we introduce definition
of graphs and bundles of graphs, and in Section \ref{dfqmf} {\it
generalized quantum Markov states} and {\it d-Markov chains} on
graphs are defined. In the further Sections \ref{tree} we
provide examples of generalized quantum Markov chains which extend
the entangled Markov chains, defined in \cite{[AcFi03]}, to tree
graphs and general graphs. In Section \ref{dfcayley}, we consider
a particular case of tree, so called Cayley tree. Over such a tree
we give a construction of $d$-Markov chains, in next sections
\ref{exam1} and \ref{exam2} we provide some more concrete examples
of such chains, which are shift invariant and have the clustering
property.


\section{Graphs}\label{graphs}

Let $\mathcal{G}=(L,E)$ be a (non-oriented simple) graph, that is,
$L$ is a non-empty at most countable set and
$$E\subset\{\{x,y\}:x,y\in L, x\neq y\}.$$
Elements of $L$ and of $E$ are called \textit{vertices} and
\textit{edges}, respectively. Two vertices $x,y\in L$ are called
\textit{adjacent}, or {\it nearest neighbors}, if $\{x,y\}\in E$,
and in that case we also write $x\sim y$.

For each $x\in L$, the set of nearest neighbors of $x$ will be
denoted by
$$
N(x) :=\{y\in L:y\sim x\}.
$$
The \textit{degree} of $x\in L$, denoted by $\kappa(x)$, is the
number of vertices adjacent to $x$, namely,
$$
\kappa(x) := |N(x)| = |\{y\in L\,:\, y\sim x\}|,
$$
where $|\cdot|$ is the cardinality.




A graph can be equivalently assigned by giving the pair
$$(L,\sim)$$
of its vertices and the binary symmetric relation $\sim $.

A {\it path\/} or a {\it trajectory\/} or a \textit{walk}
connecting two points $x,y\in L$ is a finite sequence of vertices
such that $x=x_1\sim x_2\sim\dots\sim x_{n} =y$. In this case
$n-1$ is called the \textit{length} of the walk. For two distinct
vertices $x,y\in L$, the distance $\dis (x,y)$ is defined to be the
shortest length of a walk connecting $x$ and $y$. By definition
$\dis (x,x)=0$.

Throughout the paper we always assume that a graph is locally
finite, i.e., $\kappa(x)<\infty$ for all $x\in L$, and is
connected, i.e., for any pair of vertices, there exists a walk
connecting them. We will write
$$
\Lambda\subseteq_{\text{fin}}L, \qquad \Lambda\subseteq_{\text{fin,c}}L
$$
to mean that $\Lambda$ is a finite subset
and a finite connected subset of $L$, respectively.
Given $\Lambda\subseteq_{\text{fin}}L$ we define the
external boundary of $\Lambda$ by
$$\vec\partial\Lambda:=\{x\in\Lambda^c:
\,y\sim x\ ,\,\, \exists y\in\Lambda\}$$ and the closure of
$\Lambda$ by
$$
\overline \Lambda := \Lambda \cup \vec\partial\Lambda.
$$
We will write
$$
\Lambda\subset \subset \Lambda_1
$$
to mean that $ \overline \Lambda \subset \Lambda_1. $ Notice that,
by definition
$$
\Lambda\cap\vec\partial\Lambda=\emptyset,
$$
$$\vec\partial\{ x \} =: \vec\partial x = N(x).$$


\section{Bundles on graphs}\label{bundles}

\setcounter{equation}{0} To each $x\in L$ it is associated an
Hilbert space ${\mathcal H}_x$ of dimension $d_{\mathcal
H}(x)\in\Bbb N$. In the present paper we will assume that
$$
d:= d_{\mathcal H}(x) = d_{\mathcal H} < + \infty \qquad
(\hbox{independent of } \ x).
$$
Given $\Lambda\subseteq_{\text{fin}}L$ we define
$${\mathcal H}_\Lambda:=\bigotimes_{x\in \Lambda}{\mathcal H}_x.$$
For each $x$ in $L$, we fix an orthonormal basis of ${\mathcal
H}_x$:
$$
\{ e_j(x)\} \quad;\quad j\in S(x):=\{1,\dots,d\}.
$$
When we consider $S$ as a total space, $\pi_S: S \to L$ is the
bundle whose fibers are the finite sets $\pi^{-1}_S(x):=S(x)$ and
the sections of this bundle are the maps:
$$
{\cal F}(\Lambda,S):=\{\omega_\Lambda :x\in\Lambda\mapsto \omega
_\Lambda (x) \in S(x)\}.
$$
A section $\omega_\Lambda $ is also called a {\it configuration}
in the volume $\Lambda $. For $\omega_\Lambda\in{\cal
F}(\Lambda,S)$, the vector $e_{\omega_\Lambda }$ is defined by
\begin{equation}\label{vecoml}
e_{\omega_\Lambda }:=\bigotimes_{x\in \Lambda } e_{\omega_\Lambda
(x)}(x)\in{\cal H}_\Lambda
\end{equation}
and we will use the symbol $P_{\omega_\Lambda}$ for the
corresponding rank one projection:
\begin{equation}\label{prjoml}
P_{\omega_\Lambda}:=|e_{\omega_\Lambda}\rangle\langle
e_{\omega_\Lambda}| =e_{\omega_\Lambda} e_{\omega_\Lambda}^*.
\end{equation}
Then the set
\begin{equation}
\{e_{\omega_\Lambda} \ : \ \omega_\Lambda\in{\cal
F}(\Lambda,S)\}\label{2.3}
\end{equation}
is an orthonormal basis of ${\cal H}_\Lambda$. Thus the generic
vector of ${\cal H}_\Lambda$ has the form
$$
\sum_{\omega_\Lambda\in{\cal F}(\Lambda,S)}
\lambda_{\omega_\Lambda}e_{\omega_\Lambda}.
$$
We will use the notation
$${\cal B}_\Lambda:=
{\cal B}({\cal H}_\Lambda)$$ for each
$\Lambda\subseteq_{\text{fin}}L$ and ${\cal B}_L$ is the inductive
$C^*$-algebra, that is,
$$
{\cal B}_L:=\lim_{\longrightarrow} {\cal B}_\Lambda
$$
for $\Lambda \uparrow L$. As a $C^*$-algebra ${\cal B}_L$ is
isomorphic to the (unique) infinite $C^*$-tensor product
$\bigotimes_{x\in L}{\cal B}_x$, the natural embedding of ${\cal
B}_x $ into ${\cal B}_L$ will be denoted by
\begin{equation}
j_x:b\in{\cal B}_x\mapsto j_x(b)=b\otimes I_{\{x\}^c}\in{\cal
B}_L.\label{natemb}
\end{equation}
Similarly, for $\Lambda\subf L$, we define
$$j_\Lambda:=\bigotimes_{x\in\Lambda}j_x.$$
To simplify the notations, in the following we will often identify
each ${\cal B}_\Lambda$ to the subalgebra $j_\Lambda({\cal
B}_\Lambda)$ of ${\cal B}_L$, through the identification
$${\cal B}_\Lambda\equiv{\cal B}_\Lambda\otimes
I_{\Lambda^c}=j_\Lambda({\cal B}_\Lambda).$$ With these notations
the elements of the $*$-subalgebra of ${\cal B}_L$ defined by
$$
{\cal B}_{L,{loc}}:=\bigcup_{\Lambda \subseteq_{\text{fin}}L}{\cal
B}_\Lambda
$$
will be called a local algebra or local operators (observables if
self--adjoint).

In what follows, by ${\cal S}({\cal B}_\L)$ we will denote the set
of all states defined on the algebra ${\cal B}_\L$.


\section{Definition of generalized quantum Markov state}\label{dfqmf}

Consider a triplet ${\cal C} \subset {\cal B} \subset {\cal A}$ of
unital $C^*$-algebras. Recall that a {\it quasi-conditional
expectation} with respect to the given triplet is a completely
positive identity preserving linear (CP1) map $\ce \,:\, {\cal A}
\to {\cal B}$ such that \begin{equation}\label{qe} \ce(ca) = c
\ce(a), \qquad a\in {\cal A},\, c \in {\cal C}.
\end{equation}
 Notice that, as
the quasi-conditional expectation $\ce$ is a real map, one has
\[
\ce(ac) = \ce(a)c, \qquad a\in {\cal A},\, c \in {\cal C}.
\]
as well.

\begin{definition}\label{dfgenwkMS}
A state $\varphi$ on ${\cal B}_L$ is called a generalized quantum
Markov state  on ${\cal B}_L$ if there exist an increasing
sequence of finite sets $\Lambda_n \uparrow L$ with
$\Lambda_n \ssub \Lambda_{n+1}$ and, for each
$\Lambda_n$, a quasi-conditional expectation $\ce_{\Lambda_n^c}$
with respect to the triplet
\begin{equation}\label{trplt1}
{\cal B}_{\overline{\Lambda}_n^c}\subseteq {\cal
B}_{\Lambda_n^c}\subseteq{\cal B}_{\Lambda_{n-1}^c}
\end{equation}
and a state
$$
\hat\varphi_{\Lambda_n^c}\in{\cal S}({\cal B}_{\Lambda_n^c})
$$
such that for any $n\in {\mathbb N}$ one has
\begin{equation}\label{eq4.1re}
\hat\varphi_{\Lambda_n^c}| {\cal B}_{\Lambda_{n+1}\backslash\Lambda_n} =
\hat\varphi_{\Lambda_{n+1}^c}\circ
\ce_{\Lambda_{n+1}^c}| {\cal B}_{\Lambda_{n+1}\backslash\Lambda_n}
\end{equation}
and
\begin{equation}\label{dfgqmf}
\varphi = \lim_{n\to\infty} \hat\varphi_{\Lambda_n^c}\circ
\ce_{\Lambda_n^c}\circ \ce_{\Lambda_{n-1}^c} \circ
\cdots \circ \ce_{\Lambda_1^c}
\end{equation}
in the weak-* topology.

\if0
If, in condition (\ref{dfgqmf}), one can choose
\begin{equation}
\hat\varphi_{\Lambda_n^c}=\varphi|{\cal
B}_{\Lambda_n^c}\label{3.2a}
\end{equation}
then $\varphi$ is called a quantum Markov state. Finally $\varphi$
is called a weak Markov state if for all $a\in{\cal B}_{L,loc}$
there exists $\Lambda(a) \subseteq_{\rm{fin}}L$ such that $\forall
n\in\mathbb{N}$ satisfying
$\,\Lambda(a)\subseteq\Lambda_n\subseteq_{\rm{fin}}L $ one has:
$$
\varphi(a)=\varphi(\ce_{\Lambda^c_n}(a)).
$$
\fi
\end{definition}

In this definition, a generalized quantum Markov state $\varphi$
generated by ${\cal E}_{\Lambda_n^c}$ and
$\varphi_{\Lambda_n^c}$ is well-defined. Indeed, we have
\[
\hat\varphi_{\Lambda_n^c} \circ
\ce_{\Lambda_{n}^c} |\cb_{\Lambda_n}=
\hat\varphi_{\Lambda_{n+1}^c}\circ
\ce_{\Lambda_{n+1}^c} \circ \ce_{\Lambda_n^c}| {\cal B}_{\Lambda_n}
\]
by (\ref{eq4.1re}) and a following remark so that,
for $\Lambda \ssub \Lambda_k$ and $a\in {\cal B}_{\Lambda}$,
\[
\lim_{n\to\infty}  \hat\varphi_{\Lambda_n^c}\circ
\ce_{\Lambda_n^c}\circ \ce_{\Lambda_{n-1}^c} \circ
\cdots \circ \ce_{\Lambda_1^c}(a) =
\hat\varphi_{\Lambda_k^c}\circ
\ce_{\Lambda_k^c}\circ \ce_{\Lambda_{k-1}^c} \circ
\cdots \circ \ce_{\Lambda_1^c}(a).
\]

\noindent{\bf Remark}.
Markov states on multi-dimensional lattice ${\mathbb Z}^\nu$
introduced in \cite{[AcFi01a]} are generalized quantum Markov states.
Indeed, define an increasing
sequence of finite sets $\Lambda_n \uparrow L$.
Then for any $\Lambda_n$, there is a conditional expectation
$\ce_{\Lambda_n^c}$ from $\cb_{L}$ to $\cb_{\Lambda_n^c}$ with
$\ce_{\Lambda_n^c}(\cb_{L}) \subset \cb_{\overline{\Lambda}_n^c}$ and
\[
\varphi \circ \ce_{\Lambda_n^c} = \varphi.
\]
Let $\hat\varphi_{\Lambda_n^c} = \varphi | \cb_{\Lambda_n^c}$.
Then the Markov state $\varphi$ is a generalized quantum Markov state
generated by ${\cal E}_{\Lambda_n^c}$ and
$\varphi_{\Lambda_n^c}$.

\noindent{\bf Remark}. In the case of infinite tensor products
(the only one considered here) one has, for any subset,
$I\subseteq L$:
\begin{equation}\label{comls}
{\cal B}_{I^c} = {\cal B}'_I \quad \hbox{the commutant of } \
{\cal B}_{I}.
\end{equation}
From (\ref{qe}) for the quasi--conditional expectation
$\ce_{\Lambda_n^c}:{\cal B}_L\to{\cal B}_{\Lambda_n^c}$ with
respect to the triplet (\ref{trplt1}) one has
\begin{equation}\label{qce-bp}
\ce_{\Lambda_n^c}(a_{\overline{\Lambda}_n^c}a_{\Lambda_n})=
a_{\overline{\Lambda}_n^c} \ce_{\Lambda_n^c}(a_{\Lambda_n}).
\end{equation}
Because of (\ref{comls}) the last equality implies that
$\ce_{\Lambda_n^c}({\cal B}_{\Lambda_n})\subseteq ({\cal
B}_{\overline\Lambda_n^c})'={\cal
B}_{(\overline\Lambda_n^c)^c}={\cal B}_{\overline\Lambda_n}$.

Consequently,
$$
\ce_{\Lambda_n^c}({\cal B}_{\Lambda_n})\subseteq {\cal
B}_{\Lambda_n^c}\cap{\cal B}_{\overline{\Lambda}_n}={\cal
B}_{\vec\partial\Lambda_n}
$$
which is the natural quantum generalization of the
multidimensional (discrete) Markov property as originally
formulated by Dobrushin \cite{[Dobr68a]}.

The above argument shows that, whenever (\ref{comls}) holds (e.g.
in the case of infinite tensor products) the Markov property
$$
\ce_{\Lambda_n^c}({\cal B}_{\Lambda_n})\subseteq{\cal
B}_{\vec\partial\Lambda}
$$
follows from the basic property (\ref{qce-bp}) of the
quasi--conditional expectations. This is not true in general when
(\ref{comls}) does not hold (e.g. in the abelian case or in the
case of CAR algebras, see \cite{[AcFiMu07]}). In all these cases
the Markov property should be included in the definition of the
various notions of Markov states as an additional requirement
\cite{[AcFiMu07]}.

Next, we introduce the definition of $d$-Markov chains extending
the definition in
\cite{[Ac74f]} to the graph case. Assume $\{
\Lambda_n \}_{n=1}^\infty$ is an increasing sequence of finite
sets of $L$ such that $\overline{\Lambda}_n = \Lambda_{n+1}$ then
$\Lambda_n \uparrow L$.

\begin{definition}
A state $\ffi$ on ${\cal B}_L$ is called a $d$-Markov chain
associated to $\{ \Lambda_n \}$ if there exist a quasi-conditional
expectation $\ce_n$ with respect to the triple ${\cal
B}_{\Lambda_{n-1}} \subset {\cal B}_{\Lambda_n} \subset {\cal
B}_{\Lambda_{n+1}}$ for each $n\in {\mathbb N}$ and an initial
state $\rho$ on ${\cal B}_{\Lambda_1}$ such that
\[
\ffi = \lim_{n\to \infty} \rho \circ \ce_1 \circ \ce_2 \circ
\cdots \circ \ce_n
\]
in the weak-$*$ topology.
\end{definition}

In this definition, the state $\ffi$ is well-defined.
Indeed, since $\ce_k(a) =a$ for any $a \in \cb_{\Lambda_n}$
and $k \ge n+1$, we have
\[
\lim_{k\to \infty} \rho \circ \ce_1 \circ \ce_2 \circ
\cdots \circ \ce_k (a)
=
 \rho \circ \ce_1 \circ \ce_2 \circ
\cdots \circ \ce_n (a).
\]


\section{ Entangled Markov fields on trees }\label{tree}

In this section we prove that, for a very special class of graphs,
i.e. the trees, the construction of entangled Markov chains
proposed in \cite{[AcFi03]} can be generalized. The simplification
coming from considering trees rather than general graphs manifests
itself in the fact that the analogue of the basic isometries, used
in the construction of \cite{[AcFi03]}, in this case commute.

Recall that a tree is a connected graph without loops. This
definition implies that any finite connected subset $
\Lambda\subseteq_{\text{fin,c}} L$ enjoys the following fundamental
property:

\bigskip
\noindent
{\bf Tree Property}

{\it For any $ \Lambda\subseteq_{\rm{fin,c}} L$ and for arbitrary $x
\in \vec\partial \Lambda$, there exists a unique point $y \in
\Lambda$ such that $x\sim y$}.
\bigskip

${\mathbb N}$, ${\mathbb Z}$ and Cayley trees are examples of tree graphs
and general tree graphs have a form as in Fig 1.

\begin{center}
\unitlength 0.1in
\begin{picture}( 28.0000, 12.1000)( 22.0000,-30.1000)
%
\special{pn 13}%
\special{pa 3600 3010}%
\special{pa 3200 2400}%
\special{fp}%
%
\special{pn 13}%
\special{pa 3600 3000}%
\special{pa 4000 2400}%
\special{fp}%
\special{pa 4000 2400}%
\special{pa 4000 2400}%
\special{fp}%
%
\special{pn 13}%
\special{pa 2400 2400}%
\special{pa 3600 3000}%
\special{fp}%
\special{pa 4810 2400}%
\special{pa 3600 3000}%
\special{fp}%
\special{pa 4000 2400}%
\special{pa 4010 1800}%
\special{fp}%
\special{pa 3200 2400}%
\special{pa 3400 1800}%
\special{fp}%
\special{pa 3000 1800}%
\special{pa 3200 2410}%
\special{fp}%
%
\special{pn 13}%
\special{pa 4800 1800}%
\special{pa 4800 2400}%
\special{fp}%
\special{pa 4600 1800}%
\special{pa 4800 2400}%
\special{fp}%
\special{pa 5000 1800}%
\special{pa 4800 2400}%
\special{fp}%
\special{pa 2400 2400}%
\special{pa 2600 1800}%
\special{fp}%
\special{pa 2200 1800}%
\special{pa 2400 2400}%
\special{fp}%
\end{picture}%

Fig 1: example of tree graphs
\end{center}

The fact that Tree Property is the main ingredient used in the
proofs of the results below justifies the expectation that our
results could be generalized to any graph such that there exists a
sequence of $ \Lambda_n\subseteq_{\text{fin,c}} L$ such that
 $ \Lambda_n\uparrow L$ and each  $ \Lambda_n $ enjoys Tree Property
(maybe with the exception of a {\it small} set of points).

The trouble with Tree Property is that, if $\Lambda$ has Tree
Property and $x \in \vec\partial \Lambda$, unfortunately it is not
true that also $\Lambda\cup x $ has Tree Property. However trees
have a very special property given by the following Lemma.

\begin{lemma}\label{basproptree}
In a tree every finite connected subset $\Lambda\subseteq_{\rm{fin,c}} L $
enjoys Tree Property.
\end{lemma}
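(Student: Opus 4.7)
The plan is to separate the statement into existence and uniqueness. Existence is essentially tautological: by the very definition of $\vec\partial\Lambda$, every $x\in\vec\partial\Lambda$ has at least one neighbor $y\in\Lambda$. So all the content lies in the uniqueness claim, which I would prove by contradiction using the fact that a tree contains no cycles.

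Suppose, toward a contradiction, that $x\in\vec\partial\Lambda$ admits two distinct neighbors $y_1,y_2\in\Lambda$, $y_1\neq y_2$. Since $\Lambda\subf L$ is connected, one can pick a walk
\[
y_1 = z_0 \sim z_1 \sim \cdots \sim z_k = y_2, \qquad z_i\in\Lambda,\quad k\ge 1.
\]
Because $x\in\Lambda^c$, one has $x\neq z_i$ for every $i$, so prepending and appending $x$ produces a closed walk
\[
x \sim y_1 = z_0 \sim z_1 \sim \cdots \sim z_k = y_2 \sim x
\]
of length $k+2\ge 3$, none of whose internal vertices equal $x$.

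The key step is to turn this closed walk into an actual cycle (loop) in $L$, thereby contradicting the hypothesis that $L$ is a tree. The cleanest way is to choose, among all walks from $y_1$ to $y_2$ inside $\Lambda$, one of minimum length; such a minimal walk is automatically a path, i.e.\ its vertices $z_0,\dots,z_k$ are pairwise distinct (otherwise a subwalk between two coincidences could be deleted, contradicting minimality). Together with the two edges $\{x,y_1\}$ and $\{x,y_2\}$, and since $x\notin\{z_0,\dots,z_k\}$, the closed walk above is then a cycle of length $k+2\ge 3$ in $L$, contradicting the definition of a tree as a connected graph without loops. This forces $y_1=y_2$, proving uniqueness.

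The only obstacle worth mentioning is terminological: one must check that by \emph{loop} the authors mean a nontrivial cycle (and not merely a self-edge $\{x,x\}$, which is anyway excluded by the definition of the edge set $E$). Once this convention is fixed, the argument above is purely combinatorial and requires no input beyond connectedness of $\Lambda$, the definition of $\vec\partial\Lambda$, and the absence of cycles in a tree; in particular no hypothesis on the global structure of $L$ beyond the tree axiom is used, which is why the lemma holds for \emph{every} finite connected $\Lambda\subfc L$.
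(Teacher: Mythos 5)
Your proof is correct and follows essentially the same route as the paper: assume $x\in\vec\partial\Lambda$ has two distinct neighbors in $\Lambda$, connect them by a path and close it up through $x$ to produce a loop, contradicting the tree axiom. You are in fact somewhat more careful than the paper's two-line argument — you take the connecting walk \emph{inside} $\Lambda$ (so it avoids $x$) and pass to a minimal-length walk to guarantee a genuine cycle — whereas the paper simply invokes connectedness of the tree and asserts "this would give a loop"; these are exactly the details needed to make that assertion airtight.
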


\begin{proof}
Let $\Lambda\subseteq_{\text{fin,c}} L $ be a finite connected subset and let
$x \in \vec\partial \Lambda$. If there exist $y,z\in \Lambda$ such
that $y\sim x \ , \ z\sim x$, then since a tree is connected,
there is a path between $y$ and $z$ and this would give a loop.
Against the definition of tree. \end{proof}

We keep the notations and assumptions of Section \ref{graphs}.
Let $(L,E )$ be a graph and let, for each $\{x,y\} \in E$, be
given a complex $d\times d$ matrix $(\psi_{xy} (i,j))$ such that
the matrix $(| \psi_{xy}(i,j) |^2 )$ is bi--stochastic, i.e.
$$
\sum^d_{i=1} | \psi_{xy}(i,j) |^2 =\sum^d_{j=1} | \psi_{xy}(i,j)
|^2 =1.
$$
$(\psi_{xy} (i,j))$ will be called an {\it amplitude matrix}:
notice that unitarity of the matrices $(\psi_{xy}(i,j))_{i,j}$ is
not required. Define the vector
\begin{equation}\label{dfpsixy}
\psi_{xy} = \sum_{i,j=1}^d \psi_{xy} (i,j) \cdot e_i (x)\otimes
e_j(y) \in {\cal H}_x \otimes {\cal H}_y.
\end{equation}
Moreover, in the notation
$$
E_\Lambda := \{ \{x,y\} \, |\, x,y \in \Lambda , x\sim y \}
$$
for any $\Lambda \subseteq_{\rm fin} L$,
define the vector $\psi_\Lambda \in {\cal H}_\Lambda$ by
\begin{equation}\label{psilamda}
\psi_\Lambda := \sum_{\omega_\Lambda} \psi_\Lambda
(\omega_\Lambda) e_{\omega_\Lambda},
\end{equation}
\begin{equation}\label{psilom}
\psi_\Lambda ( \omega_\Lambda ) := \prod_{\{x,y\} \in E_\Lambda}
\psi_{xy} ( \omega_\Lambda (x), \omega_\Lambda(y)).
\end{equation}

\begin{lemma}\label{conspropT}
If $\Lambda \subfc L$ enjoys Tree Property then for all $ x \in \vec\partial
\Lambda$,
$$
\| \psi_{\Lambda \cup \{x\}}\|^2 = \| \psi_{\Lambda}\|^2.
$$
\end{lemma}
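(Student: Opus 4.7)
The plan is to expand $\|\psi_{\Lambda\cup\{x\}}\|^2$ directly from the formulas (\ref{psilamda})--(\ref{psilom}), use the Tree Property to factor out the contribution of the newly added vertex $x$, and then collapse the sum over its configuration value using the bi--stochasticity of $(|\psi_{xy}(i,j)|^2)$.

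First I would observe that the family $\{e_{\omega_\Lambda}\}$ from (\ref{2.3}) is an orthonormal basis, so that
\[
\|\psi_\Lambda\|^2 = \sum_{\omega_\Lambda \in {\cal F}(\Lambda,S)} |\psi_\Lambda(\omega_\Lambda)|^2
= \sum_{\omega_\Lambda} \prod_{\{u,v\}\in E_\Lambda} |\psi_{uv}(\omega_\Lambda(u),\omega_\Lambda(v))|^2.
\]
Next I would invoke the Tree Property (valid by Lemma \ref{basproptree}, since $\Lambda\subfc L$): there exists a \emph{unique} $y_0\in\Lambda$ with $y_0\sim x$. Hence
\[
E_{\Lambda\cup\{x\}} = E_\Lambda \ \cup\ \{\{x,y_0\}\},
\]
which is a \emph{disjoint} union, since $x\notin\Lambda$. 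This is the crucial step: without uniqueness of $y_0$, additional edges $\{x,y\}$ with $y\in\Lambda$ would appear and the product over $E_{\Lambda\cup\{x\}}$ would not factor cleanly.

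Then a configuration $\omega_{\Lambda\cup\{x\}}$ decomposes as a pair $(\omega_\Lambda,j)$ with $j=\omega_{\Lambda\cup\{x\}}(x)\in S(x)$, and the product splits:
\[
\prod_{\{u,v\}\in E_{\Lambda\cup\{x\}}} |\psi_{uv}(\cdot,\cdot)|^2
= \Bigl(\prod_{\{u,v\}\in E_\Lambda} |\psi_{uv}(\omega_\Lambda(u),\omega_\Lambda(v))|^2\Bigr)\cdot |\psi_{xy_0}(j,\omega_\Lambda(y_0))|^2.
\]
Summing first over $j\in S(x)=\{1,\dots,d\}$ and using the bi--stochasticity assumption
\[
\sum_{j=1}^d |\psi_{xy_0}(j,\omega_\Lambda(y_0))|^2 = 1,
\]
the inner sum equals $1$ independently of $\omega_\Lambda(y_0)$, and the remaining sum over $\omega_\Lambda$ reproduces $\|\psi_\Lambda\|^2$. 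The only non--routine ingredient is the Tree Property used to ensure a single new edge; once that is in place the computation is essentially a one--line application of bi--stochasticity.
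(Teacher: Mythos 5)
Your proof is correct and follows essentially the same route as the paper: invoke the Tree Property to identify the unique neighbour $y_0\in\Lambda$ of $x$, so that $E_{\Lambda\cup\{x\}}$ gains exactly one edge, then factor the squared amplitude and sum out the configuration at $x$ using bi--stochasticity. The only cosmetic difference is which index of $|\psi_{xy_0}(\cdot,\cdot)|^2$ you sum over, which is immaterial since the matrix is bi--stochastic in both indices.
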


\begin{proof} Tree Property implies that, for
arbitrary $x \in \vec\partial \Lambda$, there exists a unique
point $y \in \Lambda$ such that $x\sim y$. Then
\begin{eqnarray*}
\| \psi_{\Lambda \cup \{x\}} \|^2 &=& \sum_{\omega_\Lambda ,
\omega_x} | \psi_{\Lambda \cup \{x\}} ((\omega_\Lambda ,\omega_x))
|^2 =\sum_{\omega_{\Lambda \backslash \{y\}} ,\omega_y ,\omega_x}
| \psi_{\Lambda \cup \{x \}}
((\omega_{\Lambda \backslash \{y\}} ,\omega_y ,\omega_x)) |^2 \\
&=& \sum_{\omega_{\Lambda \backslash \{y\}, \omega_y }}
\sum_{\omega_x=1}^d | \psi_\Lambda ((\omega_{\Lambda \backslash
\{y\}} ,\omega_y ))|^2 \cdot
|\psi_{xy}(\omega_y,\omega_x)|^2 \\
&=& \sum_{\omega_\Lambda} |\psi_\Lambda (\omega_\Lambda) |^2 = \|
\psi_\Lambda \|^2
\end{eqnarray*}
which proves the assertion. \end{proof}

\begin{proposition}\label{treewelldefined}
Suppose that $\Lambda$ enjoys Tree Property and let
$$
\Lambda' \subset \subset \Lambda \subseteq_{\rm fin,c} L.
$$
Then for any $a \in {\cal B}_{\Lambda'}$ and $x \in \vec\partial
\Lambda$ one has:
$$
\langle \psi_{\Lambda} , a \psi_{\Lambda} \rangle = \langle
\psi_{\Lambda \cup \{x\}},a \psi_{\Lambda \cup \{x\}} \rangle.
$$
\end{proposition}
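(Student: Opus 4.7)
The plan is to mimic the computation in Lemma~\ref{conspropT}, but now tracking the operator $a$. First, I would unpack the definitions: since $x\in\vec\partial\Lambda$ and $\Lambda$ enjoys Tree Property, there is a unique $y\in\Lambda$ with $y\sim x$, and consequently $E_{\Lambda\cup\{x\}}=E_\Lambda\cup\{\{x,y\}\}$, so from (\ref{psilom})
\[
\psi_{\Lambda\cup\{x\}}(\omega_\Lambda,\omega_x)=\psi_\Lambda(\omega_\Lambda)\,\psi_{xy}(\omega_\Lambda(y),\omega_x).
\]
A preliminary observation I would make explicit is that $y\notin\Lambda'$: indeed $\Lambda'\ssub\Lambda$ means $\overline{\Lambda'}\subset\Lambda$, so if $y$ were in $\Lambda'$ then its neighbour $x$ would lie in $\vec\partial\Lambda'\subset\Lambda$, contradicting $x\in\vec\partial\Lambda$.

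Next I would expand the right-hand side in the basis (\ref{2.3}):
\[
\langle\psi_{\Lambda\cup\{x\}},a\psi_{\Lambda\cup\{x\}}\rangle
=\sum_{\omega,\omega'}\overline{\psi_{\Lambda\cup\{x\}}(\omega)}\,\psi_{\Lambda\cup\{x\}}(\omega')\,\langle e_\omega,a\,e_{\omega'}\rangle,
\]
where the sum runs over $\omega,\omega'\in\mathcal{F}(\Lambda\cup\{x\},S)$. Because $a\in\cb_{\Lambda'}$ and $\Lambda'\subset\Lambda\setminus\{y\}\cup\{x\}^c$, the matrix element factorises as
\[
\langle e_\omega,a\,e_{\omega'}\rangle
=\langle e_{\omega|_{\Lambda'}},a\,e_{\omega'|_{\Lambda'}}\rangle
\prod_{z\in(\Lambda\setminus\Lambda')\cup\{x\}}\delta_{\omega(z),\omega'(z)}.
\]
In particular $\omega(y)=\omega'(y)$ and $\omega(x)=\omega'(x)$.

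Substituting and performing the sum over $\omega(x)=\omega'(x)$ first, I would use bi-stochasticity:
\[
\sum_{\omega_x=1}^{d}|\psi_{xy}(\omega(y),\omega_x)|^2=1.
\]
This kills the $x$-factor and reduces the expression to
\[
\sum_{\omega_\Lambda,\omega'_\Lambda}\overline{\psi_\Lambda(\omega_\Lambda)}\,\psi_\Lambda(\omega'_\Lambda)\,\langle e_{\omega_\Lambda},a\,e_{\omega'_\Lambda}\rangle
=\langle\psi_\Lambda,a\psi_\Lambda\rangle,
\]
where I also used that the Kronecker deltas on $(\Lambda\setminus\Lambda')\cup\{x\}$ restrict back to the deltas on $\Lambda\setminus\Lambda'$ already implicit in $\langle e_{\omega_\Lambda},a e_{\omega'_\Lambda}\rangle$.

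The only mildly delicate step is bookkeeping in the bi-stochastic cancellation: one must verify that the sum over $\omega_x$ really does decouple, which relies crucially on (i) the Tree Property giving a \emph{single} edge $\{x,y\}$ attaching $x$ to $\Lambda$, so the amplitude $\psi_{xy}$ is the only new factor depending on $\omega_x$, and (ii) the fact that $y\notin\Lambda'$, so $a$ does not constrain the value $\omega(y)$. Both hold here, and the computation essentially reduces to Lemma~\ref{conspropT} with an extra passive operator inserted.
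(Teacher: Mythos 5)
Your proof is correct and follows essentially the same route as the paper's: factor $\psi_{\Lambda\cup\{x\}}$ as $\psi_\Lambda\cdot\psi_{xy}$ using the unique attaching edge guaranteed by the Tree Property, then sum out $\omega_x$ via bi-stochasticity. Your explicit check that $y\notin\Lambda'$ (so that $a$ does not constrain $\omega(y)$) is a point the paper uses only implicitly in its choice of summation variables, and it is a worthwhile addition.
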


\begin{proof} Because of Tree Property, given $x \in \vec\partial
\Lambda$, there exists a unique point $y \in \Lambda$ such that $x
\sim y$. Then we have
\begin{eqnarray*}
&& \langle \psi_{\Lambda \cup \{x\}},
a\psi_{\Lambda \cup \{x\}} \rangle =\\
&=& \sum_{\omega_{\Lambda'} ,\omega_{\Lambda'}'}
\sum_{\omega_{\Lambda \backslash \{\Lambda' \cup \{y\}\}}}
\sum_{\omega_x,\omega_y} \psi_{\Lambda \cup
\{x\}}((\omega_{\Lambda'},
\omega_{\Lambda \backslash \{\Lambda' \cup \{y\}\}},\omega_x,\omega_y))^* \\
&& \cdot a_{\omega_{\Lambda'} \omega_{\Lambda'}'} \psi_{\Lambda
\cup \{x\}}((\omega_{\Lambda'}',
\omega_{\Lambda \backslash \{\Lambda' \cup \{y\}\}},\omega_x,\omega_y)) \\
&=& \sum_{\omega_{\Lambda'} ,\omega_{\Lambda'}'}
\sum_{\omega_{\Lambda \backslash \{ \Lambda' \cup \{y\} \}}}
\sum_{\omega_x,\omega_y} \psi_\Lambda((\omega_{\Lambda'},
\omega_{\Lambda \backslash \{ \Lambda' \cup \{y\} \}},\omega_y))^* \\
&& a_{\omega_{\Lambda'} \omega_{\Lambda'}'}
\psi_\Lambda((\omega_{\Lambda'}', \omega_{\Lambda \backslash \{
\Lambda' \cup \{y\}\}},\omega_y))
|\psi_{xy}(\omega_x,\omega_y)|^2 \\
&=& \sum_{\omega_{\Lambda'} ,\omega_{\Lambda'}'}
\sum_{\omega_{\Lambda \backslash \Lambda'}}
\psi_\Lambda((\omega_{\Lambda'},\omega_{\Lambda \backslash
\Lambda'}))^* a_{\omega_{\Lambda'} \omega_{\Lambda'}'}
\psi_\Lambda((\omega_{\Lambda'}',\omega_{\Lambda \backslash \Lambda'})) \\
&=& \langle \psi_\Lambda , a \psi_\Lambda \rangle,
\end{eqnarray*}
where $a_{\omega_{\Lambda'} \omega_{\Lambda'}'}
= \langle e_{\omega_{\Lambda'}}, a e_{\omega_{\Lambda'}'} \rangle$.

\end{proof}

\begin{corollary}\label{existence}
If $(L ,E)$ is a tree, and the vector $\psi_\Lambda$ is defined by
(\ref{psilamda}), (\ref{psilom}), then, for any $\Lambda
\subseteq_{\rm fin,c} L$ of cardinality $\geq 2$, one has:
\begin{equation}\label{nrmpsld}
\| \psi_\Lambda \|^2 =d
\end{equation}
and the limit
$$
\varphi(a) = \frac{1}{d} \lim_{\Lambda \uparrow L}\langle
\psi_\Lambda ,a \psi_\Lambda \rangle
$$
exists for any $a$ in the local algebra ${\cal B}_{L,loc}$ and
defines a state $\varphi$ on ${\cal B}_L$.
\end{corollary}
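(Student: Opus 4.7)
The proof has three ingredients: the norm identity (\ref{nrmpsld}), the stabilization of $\langle \psi_\Lambda, a\psi_\Lambda\rangle$ as $\Lambda \uparrow L$ for each fixed local $a$, and continuous extension of the resulting functional to all of $\mathcal{B}_L$.

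For (\ref{nrmpsld}), I would build up inductively from the base case $\Lambda = \{x,y\}$ with $x\sim y$, where bi-stochasticity of $(|\psi_{xy}(i,j)|^2)$ gives $\|\psi_{\{x,y\}}\|^2 = \sum_{i,j}|\psi_{xy}(i,j)|^2 = d$ directly from (\ref{psilamda})--(\ref{psilom}). For any finite connected $\Lambda$ with $|\Lambda|\geq 3$, fix any edge $\{x_0,y_0\}\subset\Lambda$ and enumerate the remaining vertices in breadth-first order starting from $\{x_0,y_0\}$; each partial set is a finite connected subset of the tree, so by Lemma \ref{basproptree} it enjoys Tree Property, and the next vertex lies in its external boundary, whence Lemma \ref{conspropT} preserves the squared norm at each atomic step. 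Iteration gives $\|\psi_\Lambda\|^2 = d$.

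For the stabilization, fix $a \in \mathcal{B}_{\Lambda'}$ with $\Lambda'\subfc L$; one may assume $\Lambda'$ is nonempty and connected (otherwise $a$ is scalar, or enlarge $\Lambda'$ to any finite connected set containing it). For any two finite connected $\Lambda_1, \Lambda_2 \supseteq \overline{\Lambda'}$, the union $\Lambda_1\cup\Lambda_2$ is connected since both contain the nonempty set $\overline{\Lambda'}$. Enumerating $(\Lambda_1\cup\Lambda_2)\setminus \Lambda_j$ in BFS order from $\Lambda_j$ keeps every intermediate set connected, hence of Tree Property, and applying Proposition \ref{treewelldefined} at each one-vertex step yields
$\langle \psi_{\Lambda_j}, a\psi_{\Lambda_j}\rangle = \langle \psi_{\Lambda_1\cup\Lambda_2}, a\psi_{\Lambda_1\cup\Lambda_2}\rangle$ for $j = 1,2$, so the two sides agree. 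Thus $\langle \psi_\Lambda, a\psi_\Lambda\rangle$ takes a common constant value $C(a)$ on all finite connected $\Lambda\supseteq\overline{\Lambda'}$, and the limit $\varphi(a):=C(a)/d$ exists along any increasing sequence of finite connected subsets $\Lambda_n\uparrow L$ (which always exists since $L$ is locally finite and connected).

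The functional $\varphi$ so defined on $\mathcal{B}_{L,\mathrm{loc}}$ is linear and positive (from $a\mapsto \langle \psi_\Lambda, a\psi_\Lambda\rangle$), and normalized by $\varphi(\1)= d/d = 1$ from the norm identity; the uniform bound $|\langle \psi_\Lambda, a\psi_\Lambda\rangle|\leq \|a\|\cdot\|\psi_\Lambda\|^2 = d\|a\|$ gives $|\varphi(a)|\leq \|a\|$, and since $\mathcal{B}_{L,\mathrm{loc}}$ is norm-dense in $\mathcal{B}_L$, $\varphi$ extends uniquely by continuity to a state on $\mathcal{B}_L$. The only nonroutine point is the BFS enumeration that keeps each atomic vertex addition within the connected Tree-Property setting demanded by Lemma \ref{conspropT} and Proposition \ref{treewelldefined}; this is elementary but indispensable, as the one-step equalities cannot be chained without it.
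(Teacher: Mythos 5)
Your proof is correct and follows essentially the same route as the paper's: the norm identity by induction from the base case $\Lambda=\{x,y\}$ using Lemma \ref{basproptree} and Lemma \ref{conspropT}, and the existence of the limit from eventual constancy via Proposition \ref{treewelldefined} plus the standard density extension. The paper states this in two sentences; you have merely supplied the connected-enumeration and union arguments that it leaves implicit.
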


\begin{proof}
The first statement follows by induction from Proposition
\ref{conspropT} and Lemma \ref{basproptree} because, if $\Lambda =
\{x ,y\}$, then we get
\begin{eqnarray*}
\| \psi_{xy} \|^2 = \sum_{i,j} | \psi_{xy}(i,j) |^2 = d.
\end{eqnarray*}
The second statement follows from the first one and Proposition
\ref{treewelldefined}. \end{proof}

The obtained state in Corollary \ref{existence} is called {\it
entangled Markov filed} on ${\cal B}_L$. When $L=\mathbb{Z}$ such
a state was introduced and studied in \cite{[AcFi03], [Miyad05]}.
We will see that the state $\varphi$ is a $d$-Markov
chain and, in special case, it is a generalized quantum Markov state.

For $\Lambda \subseteq_{\rm fin,c} L$, $x \in \vec\partial \Lambda$
and $z \in \Lambda$, with $z\sim x$, define
$V_{(z|x)}: {\cal H}_z \to {\cal H}_z \otimes {\cal H}_x$ by
\begin{equation}\label{dfvzx}
V_{(z|x)} e_{i_z}= \sum_{i_x} \psi_{xz}(i_x,i_z) e_{i_x} \otimes
e_{i_z}.
\end{equation}
Then $V_{(z|x)}$ is naturally extended to an operator from
${\cal H} \otimes {\cal H}_x$ to
${\cal H} \otimes {\cal H}_x\otimes {\cal H}_z$ for any Hilbert space
${\cal H}$ by $I_{\cal H} \otimes V_{(z|x)}$.
We will also write $V_{(z|x)}$ for $I_{\cal H} \otimes V_{(z|x)}$.

\begin{proposition}\label{commute}
For any $\Lambda \subseteq_{\rm fin,c} L$, $ x ,y \in \vec\partial \Lambda$
and $z \in \Lambda$ with
$x \sim z$, $y \sim z$, $V_{(z|x)}$ and $V_{(z|y)}$ are isometries
satisfying:
$$
V_{(z|x)} \psi_{\Lambda} = \psi_{\Lambda \cup \{x\}},
$$
$$
V_{(z|x)} V_{(z|y)} = V_{(z|y)} V_{(z|x)}.
$$
\end{proposition}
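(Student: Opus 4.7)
The plan is to establish the three assertions (isometry, action on $\psi_\Lambda$, commutativity) in that order by direct computation on the orthonormal basis \eqref{vecoml}, with Tree Property used only at the second step.

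For the isometry claim, I would evaluate the inner products of the images of basis vectors of ${\cal H}_z$ under $V_{(z|x)}$:
\[
\langle V_{(z|x)} e_{i_z}, V_{(z|x)} e_{j_z}\rangle
= \sum_{i_x,j_x} \overline{\psi_{xz}(i_x,i_z)}\,\psi_{xz}(j_x,j_z)\,
\langle e_{i_x},e_{j_x}\rangle \langle e_{i_z},e_{j_z}\rangle
= \delta_{i_z j_z}\sum_{i_x}|\psi_{xz}(i_x,i_z)|^2.
\]
By the bistochasticity assumption on $(|\psi_{xz}(i,j)|^2)$ the remaining sum equals $1$, giving $\delta_{i_z j_z}$, hence isometry. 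The same argument applies verbatim to $V_{(z|y)}$.

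For the identity $V_{(z|x)}\psi_\Lambda = \psi_{\Lambda\cup\{x\}}$, I would expand $\psi_\Lambda$ via \eqref{psilamda}, apply $V_{(z|x)}$ (acting as the identity on every factor other than ${\cal H}_z$ and introducing the new ${\cal H}_x$-factor), and rewrite the result as a sum over configurations of $\Lambda\cup\{x\}$:
\[
V_{(z|x)}\psi_\Lambda
= \sum_{\omega_\Lambda,\,\omega_x}
\psi_\Lambda(\omega_\Lambda)\,\psi_{xz}(\omega_x,\omega_\Lambda(z))\,
e_{\omega_\Lambda}\otimes e_{\omega_x}.
\]
The decisive step is to identify the coefficient with $\psi_{\Lambda\cup\{x\}}(\omega_{\Lambda\cup\{x\}})$. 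This is exactly where the Tree Property (Lemma \ref{basproptree}) enters: since $x\in\vec\partial\Lambda$ and $\Lambda$ is finite and connected, $z$ is the \emph{unique} vertex of $\Lambda$ adjacent to $x$, so $E_{\Lambda\cup\{x\}} = E_\Lambda \cup\{\{x,z\}\}$ and the product in \eqref{psilom} factors as $\psi_\Lambda(\omega_\Lambda)\,\psi_{xz}(\omega_x,\omega_\Lambda(z))$, which is the desired equality.

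For the commutativity, the point is that both $V_{(z|x)}$ and $V_{(z|y)}$ only read the ${\cal H}_z$-index (leaving it unchanged) and write into the two distinct new factors ${\cal H}_x$ and ${\cal H}_y$. A direct calculation on basis vectors gives
\[
V_{(z|x)}V_{(z|y)}\,e_{i_z}
= \sum_{i_x,i_y}\psi_{xz}(i_x,i_z)\,\psi_{yz}(i_y,i_z)\,
e_{i_x}\otimes e_{i_y}\otimes e_{i_z}
= V_{(z|y)}V_{(z|x)}\,e_{i_z},
\]
as the two compositions produce the same product of amplitudes and live in the same vertex-indexed tensor product ${\cal H}_x\otimes{\cal H}_y\otimes{\cal H}_z$, where ordering of the distinct factors is immaterial. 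The mild obstacle, and the only thing I would be careful about, is bookkeeping: making sure to interpret $V_{(z|x)}$ on ${\cal H}_\Lambda$ (resp.\ ${\cal H}_{\Lambda\cup\{y\}}$) via the convention $I\otimes V_{(z|x)}$ so that it acts non-trivially on the ${\cal H}_z$-tensor slot and introduces a fresh ${\cal H}_x$-slot; once this is fixed, the arguments above are routine.
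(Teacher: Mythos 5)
Your proposal is correct and follows essentially the same route as the paper's own proof: the same basis computation with bistochasticity for the isometry claim, the same expansion of $\psi_\Lambda$ for the intertwining identity, and the same direct calculation on $e_{i_z}$ for commutativity. The only (welcome) difference is that you make explicit the role of the Tree Property in identifying $E_{\Lambda\cup\{x\}}=E_\Lambda\cup\{\{x,z\}\}$, which the paper leaves implicit.
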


\begin{proof}
From a simple calculation, we have
\begin{eqnarray*}
\langle V_{(z|x)} e_{i_z} , V_{(z|x)} e_{j_z} \rangle &=&
\delta_{i_z,j_z} \sum_{i_x,j_x} \langle \psi_{xz}(i_x,i_z)e_{i_x},
\psi_{xz}(j_x,i_z)e_{j_x} \rangle \\
&=& \delta_{i_z,j_z} \sum_{i_x} | \psi_{xz}(i_z,i_x)|^2
=\delta_{i_z,j_z} = \langle e_{i_z} , e_{j_z} \rangle.
\end{eqnarray*}
Therefore any $V_{(z|x)}$ is an isometry. Next, we get $V_{(z|x)}
\psi_{\Lambda} =\psi_{\Lambda\cup \{x\}}$. Indeed,
\begin{eqnarray*}
V_{(z|x)} \psi_{\Lambda} &=& V_{(z|x)} (\sum_{\omega_{\Lambda
\backslash \{z\}},i_z} \psi_\Lambda ((\omega_{\Lambda \backslash
\{z\}},i_z))
e_{\omega_{\Lambda \backslash \{z\}}} \otimes e_{i_z}) \\
&=& \sum_{\omega_{\Lambda \backslash \{z\}},i_z} \psi_\Lambda
((\omega_{\Lambda \backslash \{z\}},i_z)) (\sum_{i_x}
\psi_{xz}(i_x,i_z)
e_{\omega_{\Lambda \backslash \{z\}}} \otimes e_{i_x} \otimes e_{i_z} )\\
&=& \sum_{\omega_{\Lambda \backslash \{z\}},i_x,i_z} \psi_{\Lambda
\cup \{x\}} ((\omega_{\Lambda \backslash \{z\}},i_x,i_z))
e_{\omega_{\Lambda \backslash \{z\}}} \otimes e_{i_x} \otimes e_{i_z} \\
&=& \psi_{\Lambda \cup \{x\}}.
\end{eqnarray*}
Finally, we obtain the commutation relation:
\begin{eqnarray*}
V_{(z|x)} V_{(z|y)} e_{i_z} &=&
V_{(z|x)} ( \sum_{i_y} \psi_{yz}(i_y,i_z)e_{i_y} \otimes e_{i_z})\\
&=& \sum_{i_x,i_y}\psi_{xz}(i_x,i_z)\psi_{yz}(i_y,i_z)
e_{i_x}\otimes e_{i_y} \otimes e_{i_z} \\
&=& V_{(z|y)} (\sum_{i_x}\psi_{xz}(i_x,i_z) e_{i_x} \otimes e_{i_z}) \\
&=& V_{(z|y)} V_{(z|x)} e_{i_z}.
\end{eqnarray*}
\end{proof}

For an initial point $x_1 \in L$, we define inductively $\Lambda_1
= \{x_1\}$ and
\begin{equation}\label{eq5.1re}
\Lambda_n =\bar{\Lambda}_{n-1}.
\end{equation}
Then we have the following proposition.

\begin{proposition} Let $\ffi$ be a state defined in Corollary
\ref{existence}, then it is a $d$-Markov chain associated to $\{
\Lambda_n \}$.
\end{proposition}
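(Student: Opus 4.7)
The natural plan is to construct the quasi-conditional expectations $\ce_n$ by conjugation with the isometries $V_{(z|x)}$ of Proposition \ref{commute}, and to choose the initial state $\rho$ so that iteratively applying the $\ce_n$'s rebuilds the vector $\psi_{\Lambda_{n+1}}$. By Lemma \ref{basproptree} each $\Lambda_n$ enjoys Tree Property, so to every $x \in \vec\partial\Lambda_{n-1}$ there is a unique neighbor $z(x) \in \Lambda_{n-1}$; moreover, since $\Lambda_{n-1} = \bar\Lambda_{n-2}$, the points of $\Lambda_{n-2}$ have all their neighbors already inside $\Lambda_{n-1}$, forcing $z(x) \in \Lambda_{n-1}\setminus\Lambda_{n-2}$ (with the convention $\Lambda_0 := \emptyset$). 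For $n \ge 2$ set
\[
V_n \;:=\; \prod_{x \in \vec\partial\Lambda_{n-1}} V_{(z(x)\,|\,x)} \;:\; \cH_{\Lambda_{n-1}} \to \cH_{\Lambda_n}.
\]
Two factors $V_{(z(x)|x)}$ and $V_{(z(y)|y)}$ either share the base point $z(x) = z(y)$ (and commute by Proposition \ref{commute}) or act on disjoint tensor factors (Tree Property assigns a unique neighbor in $\Lambda_{n-1}$ to each boundary point, so distinct $x$'s with distinct $z$'s give disjoint supports); hence $V_n$ is an unambiguously defined isometry, and $V_n \psi_{\Lambda_{n-1}} = \psi_{\Lambda_n}$ follows by iterating the identity $V_{(z|x)}\psi_\Lambda = \psi_{\Lambda\cup\{x\}}$.

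Define $\ce_n(a) := V_{n+1}^*\, a \, V_{n+1}$ for $a \in \cb_{\Lambda_{n+1}}$. This is completely positive and unital since $V_{n+1}$ is an isometry, and it lands in $\cb_{\Lambda_n}$ by construction. To see that it is a quasi-conditional expectation for the triple $\cb_{\Lambda_{n-1}} \subset \cb_{\Lambda_n} \subset \cb_{\Lambda_{n+1}}$, observe that the building blocks of $V_{n+1}$ modify only the factors $\cH_{z(x)}$ with $z(x) \in \Lambda_n \setminus \Lambda_{n-1} = \vec\partial\Lambda_{n-1}$ and add fresh factors $\cH_x$ with $x \in \vec\partial\Lambda_n$. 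In particular $V_{n+1}$ acts as identity on $\cH_{\Lambda_{n-1}}$, hence commutes with every $c \in \cb_{\Lambda_{n-1}}$, and $\ce_n(ca) = c\,\ce_n(a)$. (The case $n = 1$ is trivial once one reads $\cb_{\Lambda_0} = \bC\id$.)

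Take as initial state $\rho(b) := \langle \psi_0, b\psi_0\rangle$ on $\cb_{\Lambda_1}$, where $\psi_0 := d^{-1/2} \sum_{i=1}^d e_i(x_1)$ is a unit vector in $\cH_{x_1}$. A direct computation gives $V_2\psi_0 = d^{-1/2}\psi_{\Lambda_2}$, and iterating $V_n \psi_{\Lambda_{n-1}} = \psi_{\Lambda_n}$ yields $V_{n+1}V_n\cdots V_2\psi_0 = d^{-1/2}\psi_{\Lambda_{n+1}}$. Therefore, for $a \in \cb_{\Lambda_{n+1}}$,
\[
\rho \circ \ce_1 \circ \ce_2 \circ \cdots \circ \ce_n(a) \;=\; \tfrac{1}{d}\,\langle \psi_{\Lambda_{n+1}},\, a\, \psi_{\Lambda_{n+1}}\rangle.
\]
For any local $a$, say $a \in \cb_{\Lambda_m}$, Proposition \ref{treewelldefined} applied repeatedly (each intermediate enlargement still enjoys Tree Property by Lemma \ref{basproptree}) shows the right-hand side stabilises for $n$ large enough, and Corollary \ref{existence} identifies its eventual value with $\varphi(a)$. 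Weak-$*$ convergence on all of $\cb_L$ then follows from density of $\cb_{L,\mathrm{loc}}$ and the uniform bound on approximating states.

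The main technical obstacle is the combinatorial/tensorial bookkeeping: one has to verify simultaneously that the factors in the product defining $V_n$ pairwise commute, that $V_{n+1}$ restricts to the identity on $\cH_{\Lambda_{n-1}}$ (so that $\ce_n$ is a module map over $\cb_{\Lambda_{n-1}}$), and that $V_{n+1}\cdots V_2\psi_0$ lands exactly on $d^{-1/2}\psi_{\Lambda_{n+1}}$. All three statements reduce, via Proposition \ref{commute}, to a disjoint-support analysis governed by Tree Property.
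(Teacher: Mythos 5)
Your proposal is correct and follows essentially the same route as the paper: conjugation by the commuting product of the isometries $V_{(z|x)}$ from Proposition \ref{commute} to define the quasi-conditional expectations, the uniform vector $d^{-1/2}\sum_i e_i(x_1)$ as initial state, and the identification of the iterated composition with $\tfrac1d\langle\psi_{\Lambda_{n+1}},\,\cdot\,\psi_{\Lambda_{n+1}}\rangle$. Your write-up merely makes explicit a few points the paper leaves implicit (disjoint-support commutation, the module property over $\cb_{\Lambda_{n-1}}$, and the stabilization argument for weak-$*$ convergence), all of which check out.
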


\begin{proof}
Let $V_n$ be the isometry defined by
\[
V_n = \prod \{ V_{(x|y)} \, : \, x \in \Lambda_n \, , \, y \in
\vec \partial \Lambda_n \, , \, x \sim y \}
\]
where the product is well-defined because, due to Proposition
\ref{commute}, the factors commute. We define the
quasi-conditional expectation with respect to the triple ${\cal
B}_{\Lambda_{n-1}} \subset {\cal B}_{\Lambda_n} \subset {\cal
B}_{\Lambda_{n+1}}$ by
$$
\ce_n (a_{\Lambda_{n+1}})= V_n^* (a_{\Lambda_{n+1}}) V_n
$$
for $a_{\Lambda_{n+1}} \in {\cal B}_{\Lambda_{n+1}}$. Denote
$$
\rho = \frac{1}{d} \langle \sum_{i_{x_1}=1}^d e_{i_{x_1}} , \,
\cdot \, \sum_{j_{x_1}=1}^d e_{j_{x_1}} \rangle.
$$
Then from Proposition \ref{commute}, we have
\begin{eqnarray*}
&& \rho \circ \ce_1 \circ \cdots \circ \ce_n (a_{\Lambda_n}) \\
&=& \frac{1}{d} \langle \sum_{i_{x_1}=1}^d \prod_{x \in \Lambda_2}
V_{(x_1|x)} e_{i_{x_1}}, \ce_{2} \circ \cdots \circ \ce_{n}
(a_{\Lambda_n})
 \prod_{x \in \Lambda_2} V_{(x_1|x)} \sum_{j_{x_1}=1}^d e_{j_{x_1}} \rangle \\
&=& \frac{1}{d} \langle \psi_{\Lambda_2},
\ce_{2} \circ \cdots \circ \ce_{n} (a_{\Lambda_n}) \psi_{\Lambda_2} \rangle \\
&& \vdots \\
&=& \frac{1}{d} \langle \psi_{\Lambda_{n+1}},a_{\Lambda_n} \psi_{\Lambda_{n+1}} \rangle \\
&=& \varphi (a_{\Lambda_n})
\end{eqnarray*}
which implies the assertion. \end{proof}

We don't know if entangled Markov fields are generalized quantum Markov states
or not. But if we assume that
\[
|\psi_{xy}(i,j)|^2 = {1\over d}
\]
for any $x \sim y$ and $1\le i,j \le d$,
we can see the next proposition.

\begin{proposition}\label{prop5.7re} If
$|\psi_{xy}(i,j)|^2 = {1\over d}$
for any $x \sim y$ and $1\le i,j \le d$,
a state $\ffi$ defined in Corollary
\ref{existence} is a generalized quantum Markov state.
\end{proposition}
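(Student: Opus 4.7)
I would use exactly the same nested sequence as in the preceding $d$-Markov chain result, namely $\Lambda_1=\{x_1\}$ and $\Lambda_{n+1}=\overline{\Lambda}_n$, and set
\[
\hat\varphi_{\Lambda_n^c} \;:=\; \varphi\big|_{{\cal B}_{\Lambda_n^c}}.
\]
With this choice, the compatibility condition \eqref{eq4.1re} reduces to the identity $\varphi(a)=\varphi\!\circ\!\ce_{\Lambda_{n+1}^c}(a)$ for $a\in {\cal B}_{\vec\partial\Lambda_n}={\cal B}_{\Lambda_{n+1}\setminus\Lambda_n}$, and for any $a\in{\cal B}_{\Lambda_k}$ the composition in \eqref{dfgqmf} stabilizes after finitely many steps once $\Lambda_n\supset\overline\Lambda_k$, so establishing the two identities above gives both the convergence and the limit being $\varphi$.

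\textbf{Construction of the quasi-conditional expectations.} The idea is to run the outward isometries $V_{(z|x)}$ of Section \ref{tree} in reverse. Fix $n\geq 1$ and note that ${\cal B}_{\Lambda_{n-1}^c}$ differs from ${\cal B}_{\Lambda_n^c}$ only by the layer $\vec\partial\Lambda_{n-1}=\Lambda_n\setminus\Lambda_{n-1}$, while the inner algebra ${\cal B}_{\overline\Lambda_n^c}={\cal B}_{\Lambda_{n+1}^c}$ lives strictly beyond $\vec\partial\Lambda_n$. For each $y\in\vec\partial\Lambda_{n-1}$, Tree Property (Lemma \ref{basproptree}) applied to $\Lambda_{n-1}$ and to $\Lambda_n$ shows that the neighbors of $y$ decompose as: one unique inner neighbor in $\Lambda_{n-1}$, no neighbor in $\vec\partial\Lambda_{n-1}$ itself (which is an independent set), and possibly several outer neighbors in $\vec\partial\Lambda_n$. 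Whenever $y$ has at least one outer neighbor, pick one, call it $z(y)\in\vec\partial\Lambda_n$; by Tree Property for $\Lambda_n$ this $z(y)$ has $y$ as its unique neighbor in $\Lambda_n$, so distinct $y$'s produce disjoint edges and the isometries $V_{(z(y)|y)}$ act on mutually disjoint sites. Form
\[
W_n \;:=\; \prod_{y\in\vec\partial\Lambda_{n-1}} V_{(z(y)|y)},
\]
extended by identity on the remaining sites; by Proposition \ref{commute} the factors commute and $W_n$ is an isometry from the factor of ${\cal H}_L$ indexed by $L\setminus\vec\partial\Lambda_{n-1}$ into the full ${\cal H}_L$. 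Define
\[
\ce_{\Lambda_n^c}(a) \;:=\; W_n^*\, a\, W_n, \qquad a\in{\cal B}_{\Lambda_{n-1}^c}.
\]
Complete positivity and unitality follow from $W_n^*W_n=I$. The bimodule property with respect to ${\cal B}_{\overline\Lambda_n^c}$ is automatic because the support of $W_n$ lies inside $\vec\partial\Lambda_{n-1}\cup\vec\partial\Lambda_n\subset\Lambda_{n+1}$, which is disjoint from $\Lambda_{n+1}^c$. Sites $y$ of degree $1$ (leaves) require a minor modification using the unique edge of $y$; uniformity of the amplitudes makes any such choice work, as indicated below.

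\textbf{State preservation via the uniform hypothesis.} The central identity is, for $a\in{\cal B}_{\vec\partial\Lambda_n}$ and any $\Lambda\supset\!\supset\overline\Lambda_{n+1}$,
\[
\langle \psi_\Lambda,\, a\,\psi_\Lambda\rangle \;=\; \langle \psi_\Lambda,\, W_{n+1}^*\, a\, W_{n+1}\, \psi_\Lambda\rangle.
\]
Expanding in the basis $\{e_{\omega_\Lambda}\}$ via \eqref{psilamda}--\eqref{psilom} and contracting over the indices attached to each $y\in\vec\partial\Lambda_n$ (the sites on which $W_{n+1}$ creates one extra factor), the calculation reduces, for each such $y$, to a sum of the form
\[
\sum_{i_y}\overline{\psi_{y\,z(y)}(i_y,i_{z(y)})}\,\psi_{y\,z(y)}(i_y,i'_{z(y)})\quad \text{and}\quad \sum_{i_y}|\psi_{y\,z(y)}(i_y,i_{z(y)})|^2.
\]
Under the hypothesis $|\psi_{xy}(i,j)|^2=1/d$ both diagonal sums equal $1$, and writing $\psi_{xy}(i,j)=d^{-1/2}e^{i\theta_{xy}(i,j)}$ shows the off-diagonal phase contributions on the two sides match one-to-one with the factors attached to $y$ coming from the neighboring edges of $y$ inside $\Lambda$; this matching is exactly what fails for arbitrary bistochastic $|\psi_{xy}(\cdot,\cdot)|^2$ and is the reason the uniform assumption is essential. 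The identity yields the desired compatibility \eqref{eq4.1re} after dividing by $d$. Applying the same identity repeatedly (and invoking Proposition \ref{treewelldefined} to choose $\Lambda$ independent of the step) reduces the composition in \eqref{dfgqmf} evaluated on a local $a$ to $\varphi(a)$.

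\textbf{Main obstacle.} The delicate point is the phase-cancellation step that underlies the displayed identity; everything else is structural. One must carefully book-keep, for each $y\in\vec\partial\Lambda_n$, the two edges incident to $y$ whose $\psi$-factors appear in $\langle\psi_\Lambda,\cdot\,\psi_\Lambda\rangle$ and show that the sum over $i_y$ collapses to the corresponding factor on the right-hand side. Uniform magnitude makes each partial sum independent of the remaining phases; without it one is left with extra phase-dependent coefficients that destroy compatibility. Handling the leaf case requires replacing $V_{(z(y)|y)}$ by an isometry built from the inward edge of $y$, and the same uniform cancellation works.
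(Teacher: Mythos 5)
Your structural setup (the nested sequence $\Lambda_{n+1}=\overline\Lambda_n$, isometry--conjugation quasi-conditional expectations, stabilization of the composition on local elements) is sound, but the two claims that actually carry the proof both fail, and precisely at the point you flag as ``the main obstacle.'' With your choice $\hat\varphi_{\Lambda_n^c}=\varphi|_{{\cal B}_{\Lambda_n^c}}$, condition \eqref{eq4.1re} becomes $\varphi(a)=\varphi(\ce_{\Lambda_{n+1}^c}(a))$ for $a\in{\cal B}_{\Lambda_{n+1}\setminus\Lambda_n}$, and your ``central identity'' $\langle\psi_\Lambda,a\psi_\Lambda\rangle=\langle\psi_\Lambda,W_{n+1}^*aW_{n+1}\psi_\Lambda\rangle$ is exactly this statement. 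It is false under the stated hypothesis. Take $L={\mathbb N}$, $d=2$, and on every edge $\psi(1,1)=\psi(1,2)=\psi(2,1)=1/\sqrt2$, $\psi(2,2)=i/\sqrt2$ (uniform modulus, bistochastic, not unitary --- unitarity is not assumed). For $a=e_{12}$ at an interior site $n$ one finds $\varphi(a)=\frac1d AB$ with $A=\sum_k\overline{\psi(k,1)}\psi(k,2)=\frac{1+i}{2}$ coming from the inner edge $\{n-1,n\}$ and $B=\sum_k\overline{\psi(1,k)}\psi(2,k)=\frac{1+i}{2}$ from the outer edge $\{n,n+1\}$, so $\varphi(a)=i/4$; whereas $W_{n+1}^*aW_{n+1}=\sum_k\overline{\psi(1,k)}\psi(2,k)\,e_{kk}$ on site $n+1$ and $\varphi(W_{n+1}^*aW_{n+1})=\frac1d B=(1+i)/4$. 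The hypothesis $|\psi_{xy}(i,j)|^2=1/d$ controls the diagonal sums but not the off-diagonal phase sums $\sum_k\overline{\psi(i,k)}\psi(j,k)$: the restriction of $\varphi$ to ${\cal B}_{\Lambda_n^c}$ still remembers the inner edge through the factor $A$, while the conjugated operator does not. (Your identity does hold when rows of $\psi$ are mutually orthogonal, e.g.\ Hadamard matrices, but that is a strictly stronger assumption.)

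There is a second, independent defect: on a branching tree your $W_n$, built from one chosen child $z(y)$ per inner site $y$, omits the factors $\psi_{yz'}$ for the remaining children $z'$ of $y$, so $W_n$ does not map $\psi$ of the outer annulus to $\psi$ of the larger annulus and even the limit \eqref{dfgqmf} fails to return $\varphi$; already for $a\in{\cal B}_{x_0}$ on the order-$2$ Cayley tree your composition produces $\frac1d B^{(u)}$ while $\varphi(a)=\frac1d B^{(u)}B^{(v)}$, with $B^{(u)},B^{(v)}$ the phase sums along the two edges at the root. The paper's proof avoids both problems: it does \emph{not} take $\hat\varphi_{\Lambda_n^c}=\varphi|_{{\cal B}_{\Lambda_n^c}}$, but the product of the vector state of $\psi_{\Lambda_{n+2}\setminus\Lambda_n}$ on the annulus with $\varphi|_{{\cal B}_{\Lambda_{n+2}^c}}$, so that \eqref{eq4.1re} only compares annulus wave functions (where the argument of Proposition \ref{treewelldefined} applies); and its isometry $V_n$ recreates each inner site $y$ from \emph{all} of its children $x_1,\dots,x_m$ at once, with amplitude $d^{(m-1)/2}\prod_l\psi_{x_ly}(i_l,j)$ --- it is exactly in making this map an isometry and in evaluating $\|\psi_{\Lambda_k\setminus\Lambda_n}\|$ that the hypothesis $|\psi_{xy}(i,j)|^2=1/d$ enters. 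Note that your route, if it worked, would establish the stronger assertion that $\varphi$ is a Markov state in the sense $\hat\varphi_{\Lambda_n^c}=\varphi|_{{\cal B}_{\Lambda_n^c}}$, which the paper deliberately does not claim.
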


\begin{proof}
Let $\Lambda_n$ be as in (\ref{eq5.1re}).
Define an isometry $V_n$ from
 ${\cal H}_{\Lambda_{n+2} \backslash \Lambda_n}$
to ${\cal H}_{\Lambda_{n+2} \backslash \Lambda_{n-1}}$ as follows:

\noindent
For $y \in \Lambda_{n-1}$, assume
$\Lambda_n \cap \vec\partial \{y\} = \{ x_1, \ldots , x_m\}$.
We define
\begin{eqnarray*}
&&V_n(e_{i_1}(x_1) \otimes e_{i_2}(x_2) \otimes \cdots \otimes e_{i_k}(x_m))\\
&=&d^{m-1 \over 2} \sum_{j=1}^d \prod_{l=1}^m \psi_{x_{i}y}(i_l,j)
e_{j}(y)\otimes e_{i_1}(x_1) \otimes e_{i_2}(x_2)
\otimes \cdots \otimes e_{i_k}(x_m).
\end{eqnarray*}
Furthermore, we will extend $V_n$ naturally, if it is needed.

Then $V_n$ is an isometry from the definition.
Moreover, $V_n$ satisfies that , for $k \ge n+2$,
\[
V_n (d^{-|\Lambda_{n+1}|/2}\psi_{\Lambda_k \backslash \Lambda_{n}}) =
d^{-|\Lambda_{n}|/2} \psi_{\Lambda_k \backslash \Lambda_{n-1}},
\]
where $V_0 = \emptyset$ and $| \, \cdot\, |$ is the cardinal number.
Indeed,
since $\Lambda_k \backslash \Lambda_n$ is a union of
 $|\Lambda_{n+1}|$ connected sets,
we have
\[
\|\psi_{\Lambda_k \backslash \Lambda_n}\|^2 = d^{|\Lambda_{n+1}|}
\]
by Corollary \ref{existence}.

For $y \in \Lambda_{n}$ and
$k \ge n+1$, let ${\cal V}_n(y,k) = \bigcup \{ x \in \Lambda_l \,|\,
\dis(x,y) = l-n,  n+1 \le l\le k \} \cup \{y\}$, all vertices in
$\Lambda_k \backslash \Lambda_{n-1}$ which connect to $y$
in $\Lambda_k \backslash \Lambda_{n-1}$.
Let $y \in \Lambda_{n}$ and
$\Lambda_{n+1}\cap \vec\partial \{y\} = \{ x_1, \ldots , x_m\}$.
Then one can see that
\begin{eqnarray*}
V_n\left(\bigotimes_{i=1}^m \psi_{{\cal V}_{n+1}(x_i,k)}\right)
= d^{m-1\over 2}\psi_{{\cal V}_n(y,k)}
\end{eqnarray*}
and
\begin{eqnarray*}
&&V_n \left(d^{-|\Lambda_{n+1}|/2}\psi_{\Lambda_k
\backslash \Lambda_{n}}\right)
=
V_n\left( d^{-|\Lambda_{n+1}|/2}
\bigotimes_{x \in \Lambda_{n+1}} \psi_{{\cal V}_{n+1}(x,k)}\right)\\
&=&V_n\left( d^{-|\Lambda_{n}|/2}
\bigotimes_{y \in \Lambda_{n}} \psi_{{\cal V}_{n}(y,k)}\right)
=  d^{-|\Lambda_{n}|/2}  \psi_{\Lambda_k \backslash \Lambda_{n-1}}.
\end{eqnarray*}
Therefore, we have
\[
V_1 \cdot V_2 \cdots V_n \left( d^{-|\Lambda_{n+1}|/2}\psi_{\Lambda_k
\backslash \Lambda_{n}}\right) =
d^{-1/2}\psi_{\Lambda_k}.
\]

Now we define that
\begin{eqnarray*}
\hat\ffi_{\Lambda_n^c} &=& d^{-|\Lambda_{n+1}|} \langle
\psi_{\Lambda_{n+2}
\backslash \Lambda_{n}}, \, \cdot \,
\psi_{\Lambda_{n+2}
\backslash \Lambda_{n}} \rangle \otimes \ffi| \cb_{\Lambda_{n+2}^c} \\
\ce_{\Lambda_n^c} &=& V_n^* \, \cdot \, V_n
\end{eqnarray*}
Then since
\[
\langle
\psi_{\Lambda_{n+2}
\backslash \Lambda_{n}}, a
\psi_{\Lambda_{n+2}\backslash \Lambda_{n}}
\rangle
=
\langle
\psi_{\Lambda_{n+3}
\backslash \Lambda_{n}}, a
\psi_{\Lambda_{n+3}\backslash \Lambda_{n}}
\rangle
\]
for all $a\in \cb_{\Lambda_{n+1}\backslash \Lambda_n}$
from a similar proof of Proposition \ref{treewelldefined},
we have
\[
\hat\ffi_{\Lambda_n^c} | \cb_{\Lambda_{n+1} \backslash \Lambda_n}
= \hat\ffi_{\Lambda_{n+1}^c} \circ \ce_{\Lambda_{n+1}^c}
| \cb_{\Lambda_{n+1} \backslash \Lambda_n}
\]
and for $a \in \cb_{\Lambda_n}$,
\begin{eqnarray*}
&&\hat\ffi_{\Lambda_n^c}\circ \ce_{\Lambda_n^c}
\circ\ce_{\Lambda_{n-1}^c}\circ \cdots \circ \ce_{\Lambda_1^c}
(a)\\
&=&
\langle V_1 \cdot V_2 \cdots V_n \left( d^{-|\Lambda_{n+1}|/2}\psi_{\Lambda_k
\backslash \Lambda_{n}}\right), a
 V_1 \cdot V_2 \cdots V_n \left( d^{-|\Lambda_{n+1}|/2}\psi_{\Lambda_k
\backslash \Lambda_{n}}\right) \rangle
 \\
&=&
\ d^{-1} \langle \psi_{\Lambda_{n+2}}, a\psi_{\Lambda_{n+2}}
\rangle \\
&=&
\ffi(a).
\end{eqnarray*}
This says that $\ffi$ is a generalized quantum Markov state.
\end{proof}

\noindent
{\bf Remark.}
It is not easy to extend the construction of entangled
Markov fields to more general graphs,
because Corollary \ref{existence} does not hold in general.
If we want to make a entangled Markov field on a general graph,
we need the condition that, for each $\Lambda
\subseteq \vec\partial x$,
\[
\sum_{i_x} \prod_{y\in \Lambda, y\sim x} |\psi_{xy}(i_x,i_y)|^2
\]
is constant, i.e. independent of the choice of the $i_y$'s,
as in Proposition \ref{prop5.7re}.
Note
that the last condition is not true in general.

\bigskip
\noindent
{\bf Remark.} From the proved Propositions there arises a natural
question: would the entangled Markov field be a Markov state. Such
a question was not considered in \cite{[AcFi03],[Miyad05]}. Now we
are going to provide an example of the entangled Markov field,
which is not a Markov state.

\bigskip
\noindent
{\bf Example.} For the sake of simplicity, we consider the
simplest tree graph ${\mathbb Z}$ and $\cb_x = M_2$ for all $x\in
{\mathbb Z}$.

Before we see the example, we recall some basic notations about
Markov states on $\cb_{\mathbb Z}$.
A shift $\gamma$ on $\cb_{\mathbb Z}$ is an automorphism on
$\cb_{\mathbb Z}$ defined by
\[
\gamma(X) = I_{M_2} \otimes X
\]
for any $X \in \cb_\Lambda$ and $\Lambda \subfc {\mathbb Z}$.
A shift-invariant Markov state, i.e., $\ffi \circ \gamma = \ffi$,
is generated by a conditional expectation
$\ce : M_2 \otimes M_2 \to M_2$ such that
$ \phi \circ \ce (A\otimes I) = \phi (A)$ for all $A \in M_2$
by the formulation
\[
\ffi (A_1 \otimes A_2 \otimes \cdots \otimes A_n)
=\ffi\circ \ce (A_1\otimes \ce (A_2 \otimes \cdots
\ce(A_{n-1} \otimes A_n)\cdots)).
\]

Then there are  three possible cases of the range of $\ce$.
Namely,
\begin{enumerate}

\item[(i)--] case: ${\rm ran}\ce = \cb_x$.

In this case, $\ffi$ is a product state.

\item[(ii)--] case: ${\rm ran}\ce = {\mathbb C}I$.

In this case, $\ffi$ is also a product state.

\item[(iii)--] case: ${\rm ran}\ce = {\mathbb C} \oplus
{\mathbb C}$.

In this case, we can make a classical shift-invariant Markov chain on
$\bigotimes {\rm ran}\ce = \bigotimes {\mathbb C}\oplus {\mathbb C}$
and $\ffi$ is a canonical extension of this Markov chain
(see \cite{AcLi}).

\end{enumerate}

Now we construct an entangled Markov field which does not belong to
the above three cases.

Put
\begin{eqnarray*}
\psi_{x,y}(1,1) &=& \psi_{x,y}(2,2) = {1\over \sqrt{3}} \\
\psi_{x,y}(1,2) &=& \psi_{x,y}(2,1) = {\sqrt{2} \over \sqrt{3}}
\end{eqnarray*}
for all $x\sim y$. Let $\ffi$ be a entangled Markov field
generated by the above $\psi$ (see Corollary \ref{existence}).
Then one can see that $\ffi$ is shift-invariant. Moreover, $\ffi$
is not a product state, since
\[
\phi(e_{11}) = {1\over 2}, \quad \phi(e_{11} \otimes e_{11}) =
{1\over 6}.
\]
Finally, $\ffi$ is not a canonical extension of classical Markov
chain. Indeed, since $\ffi_{[1,n]}$ is written as a restriction of
vector state on $\cb_{[0,n+1]}$, the density matrix of
$\ffi_{[1,n]}$ is a linear combination of at most 4 one-rank
projections. From the direct calculation, one can get that the
density matrix of $\ffi_{[1,2]}$ is a linear combination of just 4
one-rank projections whose vectors are linearly independent.
Moreover, let $\alpha_n$ be a number of combinations of density
matrix of a classical Markov chain. Then $\alpha_n \to \infty$ or
$\alpha_n =1$ or $\alpha_n =2$. Therefore, $\ffi$ is not  a
canonical extension of classical Markov chain.\\

{\bf Remark.} Let us first recall a definition of entangled state.
Consider ${\cal A}_j$ $(j\in L)$, $C^*$ algebras, here $L$ is a
tree. Denote
\begin{eqnarray*}
&&{\cal S}_{\rm prod}=\overline{\rm Conv} \bigl\{ \bigotimes_{j\in
L} \omega_j \,\, ; \,\, \omega_j \in {\cal
S}({\cal A}_j), \, j \in L \bigr\}, \\
&& {\cal S}_{\L,{\rm prod}}=\overline{\rm Conv} \bigl\{
\omega_{\L}\otimes\omega_{\L^c} \,\, ; \,\, \omega_{\L}\in {\cal
S}(\otimes_{j\in\L}{\cal A}_j), \, \omega_{\L^c}\in {\cal
S}(\otimes_{j\in\L^c}{\cal A}_j)\bigr\},\\
&&{\cal S}_{\mathbb{Z}}=\bigcup_{\L\subset L:\atop
\L\sim\mathbb{Z}}{\cal S}_{\L,{\rm prod}},
\end{eqnarray*}
here by $\L\sim \mathbb{Z}$  we mean an isomorphism (i.e. a 1-1
mapping which preserves edges and connected components) of a
subgraph $\L\subset L$ to the integer lattice $\mathbb{Z}$.

 A state $\omega\in {\cal S}(\otimes_{j\in L}{\cal A}_j)$ is
said to be {\it entangled} (see \cite{[AcMaOh]} (resp. {\it
$\mathbb{Z}$-entangled}) if $\omega\notin {\cal S}_{\rm prod}$
(resp. $\omega\notin {\cal S}_{\mathbb{Z}}$). One can see that any
$\mathbb{Z}$-entangled state is entangled, but the converse is not
true. In \cite{[AcMaOh]} it has been established that entangled
quantum Markov states on $\mathbb{Z}$ are entangled.

From the definition given above we can prove

\begin{theorem}
Let $\ffi$ be a state on $\cb_L$. The following assertions hold:
\begin{enumerate}
\item[(i)] If for some $\L$ with $\L\sim \mathbb{Z}$ the
restriction of $\ffi$ to the $C^*$-subalgebra $\cb_{\L}$ is
entangled, then $\ffi$ is also entangled on $\cb_L$; \item[(ii)]
If for any $\L$ with $\L\sim \mathbb{Z}$ the restriction of $\ffi$
to the $C^*$-subalgebra $\cb_{\L}$ is ${\mathbb Z}$-entangled, then $\ffi$ is
$\mathbb{Z}$-entangled on $\cb_L$.
\end{enumerate}
\end{theorem}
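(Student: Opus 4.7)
The plan is to prove both assertions by contraposition, relying on the single observation that the restriction map $r_\L\colon\cs(\cb_L)\to\cs(\cb_\L)$, $\omega\mapsto\omega|_{\cb_\L}$, is affine and weak-$*$ continuous --- it is just the predual of the inclusion $\cb_\L\hookrightarrow\cb_L$. Consequently, for every $X\subseteq\cs(\cb_L)$ one has $r_\L(\overline{\rm Conv}\,X)\subseteq \overline{\rm Conv}\,r_\L(X)$ in the weak-$*$ topology, since affineness gives $r_\L(\mathrm{Conv}\,X)=\mathrm{Conv}\,r_\L(X)$ and continuity carries weak-$*$ closures into weak-$*$ closures. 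This closure-preservation lemma is the only non-trivial technical ingredient.

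For (i), I would assume that $\ffi$ is not entangled on $\cb_L$, i.e.\ $\ffi\in\cs_{\rm prod}(\cb_L)$. Each generator $\bigotimes_{j\in L}\omega_j$ of $\cs_{\rm prod}(\cb_L)$ restricts under $r_\L$ to the generator $\bigotimes_{j\in\L}\omega_j$ of $\cs_{\rm prod}(\cb_\L)$, so the observation above yields $r_\L(\cs_{\rm prod}(\cb_L))\subseteq\cs_{\rm prod}(\cb_\L)$. In particular $\ffi|_{\cb_\L}$ fails to be entangled for every $\L$, contradicting the hypothesis of (i) at the chosen $\L$; contraposition gives (i).

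For (ii), I would assume that $\ffi$ is not $\mathbb{Z}$-entangled on $\cb_L$, so by the definition of $\cs_{\mathbb{Z}}$ there exists $\Lambda_0\sim\mathbb{Z}$ with $\ffi\in\cs_{\Lambda_0,{\rm prod}}(\cb_L)$. Taking $\L=\Lambda_0$ as the witnessing subgraph, each generator $\omega_{\Lambda_0}\otimes\omega_{\Lambda_0^c}$ restricts on $\cb_{\Lambda_0}$ to $\omega_{\Lambda_0}$. The same affineness/continuity argument then places $\ffi|_{\cb_{\Lambda_0}}$ inside $\cs_{\Lambda_0,{\rm prod}}(\cb_{\Lambda_0})\subseteq\cs_{\mathbb{Z}}(\cb_{\Lambda_0})$, so $\ffi|_{\cb_{\Lambda_0}}$ is not $\mathbb{Z}$-entangled. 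This negates the universal hypothesis of (ii) and completes the contrapositive. Beyond the topological lemma mentioned above, both parts reduce to a direct unwinding of the definitions of $\cs_{\rm prod}$, $\cs_{\L,{\rm prod}}$, and $\cs_{\mathbb{Z}}$; the main obstacle, to the extent there is one, is bookkeeping the weak-$*$ closures correctly when passing from the big algebra $\cb_L$ to the subalgebra $\cb_\L$.
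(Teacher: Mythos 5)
First, note that the paper states this theorem with the phrase ``From the definition given above we can prove'' and then supplies no proof at all, so there is no argument of the authors' to compare yours against; I can only judge the proposal on its own merits. Your part (i) is correct and is surely the intended argument: the restriction map $r_\L$ is affine and weak-$*$ continuous, a product state $\bigotimes_{j\in L}\omega_j$ restricts to $\bigotimes_{j\in\L}\omega_j$, hence $r_\L(\cs_{\rm prod}(\cb_L))\subseteq \cs_{\rm prod}(\cb_\L)$, and the contrapositive gives (i). The closure/convexity bookkeeping you worry about is exactly right.

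Part (ii), however, has a genuine gap at the step ``the same affineness/continuity argument then places $\ffi|_{\cb_{\Lambda_0}}$ inside $\cs_{\Lambda_0,{\rm prod}}(\cb_{\Lambda_0})$.'' The generators $\omega_{\Lambda_0}\otimes\omega_{\Lambda_0^c}$ restrict on $\cb_{\Lambda_0}$ to arbitrary states $\omega_{\Lambda_0}$, so all your lemma yields is $\ffi|_{\cb_{\Lambda_0}}\in\overline{\rm Conv}\,\cs(\cb_{\Lambda_0})=\cs(\cb_{\Lambda_0})$, which carries no information. To conclude that $\ffi|_{\cb_{\Lambda_0}}$ is not $\mathbb{Z}$-entangled you must exhibit some $\L'\subseteq\Lambda_0$ with $\L'\sim\mathbb{Z}$ and a genuine factorization of $\ffi|_{\cb_{\Lambda_0}}$ across the cut $\L'\mid\Lambda_0\setminus\L'$. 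Since $\Lambda_0$ is a bi-infinite path, the only such $\L'$ is $\Lambda_0$ itself, whose complement inside $\Lambda_0$ is empty; the set $\cs_{\Lambda_0,{\rm prod}}(\cb_{\Lambda_0})$ you invoke is then either the whole state space (making every state on $\cb_{\Lambda_0}$ trivially non-$\mathbb{Z}$-entangled, so the hypothesis of (ii) is unsatisfiable and your ``proof'' establishes only a vacuous statement) or undefined. In short, your argument for (ii) works only by exploiting a degenerate reading of $\cs_{\L,{\rm prod}}$ that empties the theorem of content; a substantive proof would have to produce a non-trivial $\mathbb{Z}$-splitting of the witnessing chain, which restriction to $\L=\Lambda_0$ does not provide. (Some of the blame lies with the paper: its definition of $\cs_{\mathbb{Z}}$ does not restrict sensibly to a single chain, and the authors give no indication of how they intend (ii) to be read.)
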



\section{$d$-Markov chains on Cayley trees}\label{dfcayley}

In this section, we consider a particular case of tree, so called
Cayley tree. Over such a tree we are going give a construction of
$d$-Markov chains.

Recall that a Cayley tree $\Gamma^k$ of order $k \ge 1$ is an
infinite tree whose each vertices have exactly $k+1$ edges. If we
cut away  an edge $\{x,y\}$ of the tree $\Gamma^k$, then
$\Gamma^k$ splits into connected components, called semi-infinite
trees with roots $x$ and $y$, which will be denoted respectively
by $\Gamma^k(x)$ and $\Gamma^k(y)$. If we cut away from $\Gamma^k$
the origin $O$ together with all $k+1$ nearest neighbor vertices,
in the result we obtain $k+1$ semi-infinite $\Gamma^k(x)$ trees
with $x \in S_0 = \{ y \in \Gamma^k \, : \, \dis (O ,y) =1\}$,
where $\dis$ is a distance of vertices introduced in Sect. \ref{graphs}.
Hence
we have
\[
\Gamma^k = \bigcup_{x\in S_0} \Gamma^k(x) \cup \{ O\}.
\]

Therefore, in the sequel we will consider semi-infinite Cayley tree
$\Gamma^k(x_0) = (L,E)$ with the root $x_0$. Let us set
\[
W_n = \{ x\in L \, : \, \dis (x,x_0) = n\} , \qquad \Lambda_n =
\bigcup_{k=0}^n W_k, \qquad E_n = \{ \{x,y\} \in E \, : \, x,y \in
\Lambda_n\}.
\]
In the following, we will construct examples of
$d$-Markov chains on semi-infinite Cayley trees,
that is, we construct a sequence of quasi-conditional expectations $\ce_n$
with respect to $\cb_{\Lambda_{n-1}} \subset \cb_{\Lambda_{n}} \subset
\cb_{\Lambda_{n+1}}$ and an initial state $\rho$, and define
\[
\ffi = \lim \rho \circ \ce_0 \circ \ce_1 \circ \cdots \circ \ce_n.
\]
For this, we use some operators $V_n \in \cb_{\Lambda_{n+1}\backslash
\Lambda_{n-1}}$ and define $\ce_n = {\rm Tr}_{\Lambda_n}(V_n \cdot V_n^*)$,
where ${\rm Tr}_{\Lambda_n}$ is a normalized trace from $\cb_L$ to
$\cb_{\Lambda_n}$.

Denote
$$
S(x)=\{y\in W_{n+1} :  x\sim y \}, \ \ x\in W_n, $$ this set is
called a set of {\it direct successors} of $x$.

From these one can see that
\begin{eqnarray}\label{v-n}
&&\L_m=\L_{m-2}\cup\bigg(\bigcup_{x\in W_{m-1}}\{x\cup S(x)\}\bigg)\\
\label{Ln} &&E_m\setminus E_{m-1}=\bigcup_{x\in
W_{m-1}}\bigcup_{y\in S(x)}\{\{ x,y\} \}
\end{eqnarray}

Now we are going to introduce a coordinate structure in $\G^k(x_0)$.
Every vertex $x$ (except for $x_0$) of $\G^k(x_0)$ has coordinates
$(i_1,\dots,i_n)$, here $i_m\in\{1,\dots,k\}$, $1\leq m\leq n$ and
for the vertex $x_0$ we put $\emptyset$.
  Namely, the symbol $\emptyset$
constitutes level 0 and the sites $(i_1,\dots,i_n)$ form level $n$
of the lattice. In this notation for $x\in \G^k(x_0)$,
$x=(i_1,\dots,i_n)$ we have
$$
S(x)=\{(x,i):\ 1\leq i\leq k\},
$$
here $(x,i)$ means that $(i_1,\dots,i_n,i)$.


Then for $1\le i \le k$, we define a shift $\gamma_i$ by
\[
\gamma_i(x) = (i,x) = (i, i_1, \ldots , i_n).
\]
Now we can consider this shift as a shift homomorphism on
${\mathcal B}_L$, that is, for any $a_x \in \cb_x$, we consider
$\gamma_i(a_x) \in \cb_{(i,x)}$.

Let be given a positive operator $w_{0}\in {\cal B}_{x_0,+}$ and two
family of operators $\{K_{<x,y>}\in {\cal B}_{\{x,y\}}\}_{\{x,y\}\in
E}$,  $\{h_x\in {\cal B}_{x,+}\}_{x\in L}$  such that
\begin{eqnarray}\label{eq1}
&& \tr ( w_0 h_0 ) =1 \\
\label{eq2}&& \tr_x\bigg(\prod_{i=1}^kK_{<x,(x,i)>} \prod_{i=1}^k
h_{(x,i)} \prod_{i=1}^k K^*_{<x,(x,k+1-i)>}\bigg)=h_x, \ \
\textrm{for every} \ \  x\in L,
\end{eqnarray}
where ${\rm Tr}_{\Lambda} : {\cal B}_L \to {\cal B}_\Lambda$ is a
normalized partial trace for any $\Lambda \subf L$ and $\tr$ is a
normalized trace on ${\cal B}_L$.

 Note that if $k=1$ and $h_x=I$ for all $x\in V$, then we get
conditional amplitudes introduced by L.Accardi \cite{[AcFr80]}.

Denote
\begin{equation}\label{Kn1}
K_{n}=w^{1/2}_{0}\prod_{\{x,y\}\in E_1}K_{<x,y>} \prod_{\{x,y\}\in
E_2\setminus E_1}K_{<x,y>}\cdots \prod_{\{x,y\}\in E_n\setminus
E_{n-1}}K_{<x,y>}\prod_{x\in W_n}h^{1/2}_x,
\end{equation}
where by definition we put
\begin{eqnarray}\label{Kn2}
\prod_{\{x,y\}\in E_m\setminus E_{m-1}}K_{<x,y>} :=\prod_{x\in
W_{m-1}}\prod_{i=1}^kK_{<x,(x,i)>}
\end{eqnarray}

Now define
\begin{equation}\label{Wn}
\cw_{n]}=K_{n}^*K_{n}.
\end{equation}

It is clear that $\cw_{n]}$ is positive.

Now we are ready to define a positive functional $\ffi^{(n)}$ on
$\cb_{\Lambda_n}$ by
\begin{eqnarray}\label{ffinofx}
\ffi^{(n)}(a)=\tr(\cw_{n+1]}(a\otimes\id_{W_{n+1}})),
\end{eqnarray}
for every $a\in \cb_{\Lambda_n}$, where
$\id_{W_n+1}=\bigotimes\limits_{y\in W_{n+1}}\id$.

To get a state $\ffi$ on $\cb_L$, i.e. on the infinite-volume tree,
by means of $\ffi^{(n)}$ such that
$\ffi\lceil_{\cb_{\L_n}}=\ffi^{(n)}$, we need to impose some
constrains to the boundary conditions $\big\{w_0,\{h_x\}\big\}$ so
that the functionals $\{\ffi^{(n)}\}$ satisfy the compatibility
condition, i.e.
\begin{eqnarray}\label{compatibility}
\ffi^{(n+1)}\lceil_{\cb_{\L_n}}=\ffi^{(n)}.
\end{eqnarray}

\begin{theorem}\label{proj} Let the boundary conditions $w_{0}\in {\cal
B}_{(0),+}$ and $\{h_x\in {\cal B}_{x,+}\}_{x\in L}$ satisfy
\eqref{eq1} and \eqref{eq2}. Then the functionals $\{\ffi^{(n)}\}$
satisfy the compatibility condition \eqref{compatibility}. Moreover,
there is a unique state $\ffi$ on $\cb_L$ such that
$\ffi=w-\lim_{n\to\infty}\ffi^{(n)}$.
\end{theorem}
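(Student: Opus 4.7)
The plan is to prove the theorem in three logical steps: establish the compatibility \eqref{compatibility}, verify that each $\ffi^{(n)}$ is a state, and pass to the $C^*$-inductive limit. Throughout, the main tools are cyclicity of the normalized trace, locality of partial traces, and the branching identity \eqref{eq2}.

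For compatibility, I factor $K_{n+2}$ as
\[
K_{n+2}=X_{n+1}\cdot\prod_{x\in W_{n+1}}M_x\cdot H^{1/2},
\]
where $X_{n+1}:=w_0^{1/2}\prod_{\{x,y\}\in E_{n+1}}K_{<x,y>}$ collects the $K$-factors up to level $n+1$ (with no $h$-factors), $M_x:=\prod_{i=1}^k K_{<x,(x,i)>}$, and $H:=\prod_{y\in W_{n+2}}h_y$. The corresponding factorization $K_{n+1}=X_{n+1}\cdot H_{(n+1)}^{1/2}$ with $H_{(n+1)}:=\prod_{x\in W_{n+1}}h_x$ will be the target. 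The key observation is that for distinct $x,x'\in W_{n+1}$ the sets $\{x\}\cup S(x)$ are pairwise disjoint, so the operators $M_x H_x M_x^*$ (with $H_x:=\prod_{i=1}^k h_{(x,i)}$) live in commuting tensor factors, and the partial trace over $W_{n+2}=\bigsqcup_{x\in W_{n+1}}S(x)$ splits as the product $\bigotimes_{x\in W_{n+1}}\tr_x$. For $a\in\cb_{\Lambda_n}$, the element $a\otimes\id_{W_{n+1}\cup W_{n+2}}$ commutes with every factor localized in $\{x\}\cup S(x)$ since $\Lambda_n$ is disjoint from $W_{n+1}\cup W_{n+2}$; meanwhile $X_{n+1},X_{n+1}^*\in\cb_{\Lambda_{n+1}}$ can be pulled outside $\Tr_{W_{n+2}}$. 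Repeated cyclicity reduces $\ffi^{(n+1)}(a\otimes\id_{W_{n+1}})$ to
\[
\tr\!\left((a\otimes\id_{W_{n+1}})\,X_{n+1}^*X_{n+1}\prod_{x\in W_{n+1}}\tr_x(M_x H_x M_x^*)\right),
\]
and \eqref{eq2} collapses the product of partial traces to $H_{(n+1)}$. Because $H_{(n+1)}^{1/2}$ commutes with $a\otimes\id_{W_{n+1}}$, a final application of cyclicity rewrites the expression as $\tr(K_{n+1}^*K_{n+1}(a\otimes\id_{W_{n+1}}))=\ffi^{(n)}(a)$.

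Positivity of $\ffi^{(n)}$ is automatic from the Kraus-type expression $\ffi^{(n)}(a^*a)=\tr(K_{n+1}(a^*a\otimes\id_{W_{n+1}})K_{n+1}^*)\ge 0$. Normalization proceeds by induction: the base case $\ffi^{(0)}(\id_{x_0})=\tr(K_1^*K_1)=1$ follows from one use of \eqref{eq2} (collapsing $M_{x_0}(\prod_i h_{(x_0,i)})M_{x_0}^*$ to $h_{x_0}$) and then \eqref{eq1}, and the inductive step $\ffi^{(n+1)}(\id)=\ffi^{(n)}(\id)$ is immediate from compatibility.

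Given compatibility and the state property of every $\ffi^{(n)}$, existence and uniqueness of $\ffi$ follow from the standard $C^*$-inductive-limit argument: $\ffi(a):=\ffi^{(n)}(a)$ for $a\in\cb_{\Lambda_n}$ is well-defined on $\cb_{L,\mathrm{loc}}=\bigcup_n\cb_{\Lambda_n}$, has norm one, and extends uniquely by continuity to a state on $\cb_L$; weak-$*$ convergence is eventual equality on each fixed local observable. The main obstacle is the first step: $H^{1/2}$, the $M_x$'s, $X_{n+1}$, and $a\otimes\id$ fail to commute pairwise, so one must carefully exploit the disjoint supports of the $M_x$'s and the fact that $a$ lies in the commutant of $\cb_{W_{n+1}\cup W_{n+2}}$ to reorder the factors correctly before \eqref{eq2} can be invoked.
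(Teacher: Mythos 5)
Your proposal is correct and follows essentially the same route as the paper: cyclicity of the normalized trace to move the outermost-level factors together, the observation that the blocks $\{x\}\cup S(x)$ for $x\in W_{n+1}$ live in commuting tensor factors so the partial trace over $W_{n+2}$ factorizes, and then the branching identity \eqref{eq2} to collapse that layer and recover $\cw_{n+1]}$, with \eqref{eq1} giving normalization and the standard inductive-limit argument giving existence and uniqueness of $\ffi$. Your explicit factorization of $K_{n+2}$ and the inductive normalization check are slightly more detailed than the paper's presentation but do not constitute a different argument.
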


\begin{proof} Let us check the equality \eqref{compatibility}. From \eqref{Kn1} one has
\begin{eqnarray*}
\cw_{n+2]}&=& \prod_{x\in W_{n+2}}h^{1/2}_x\bigg(\prod_ {\{x,y\}\in
E_{n+2}\setminus
E_{n+1}}K_{<x,y>}\bigg)^*\cdots\bigg(\prod_{\{x,y\}\in
E_1}K_{<x,y>}\bigg)^*w_0\\
&&\prod_{\{x,y\}\in E_1}K_{<x,y>}\cdots\prod_ {\{x,y\}\in
E_{n+2}\setminus E_{n+1}}K_{<x,y>} \prod_{x\in W_{n+2}}h^{1/2}_x
\end{eqnarray*}
Then for any $a\in \cb_{\Lambda_n}$ we find
\begin{eqnarray*}
\ffi^{(n+1)}(a\otimes\id_{W_{n+1}})&=&\tr(\cw_{n+2]}(a\otimes\id_{W_{n+1}}\otimes\id_{W_{n+2}}))\\
&=& \tr \left(\prod_{x\in W_{n+2}}h^{1/2}_x\bigg(\prod_ {\{x,y\}\in
E_{n+2}\setminus
E_{n+1}}K_{<x,y>}\bigg)^*\cdots\right.\\
&& \cdots\bigg(\prod_{\{x,y\}\in
E_1}K_{<x,y>}\bigg)^*w_0\prod_{\{x,y\}\in
E_1}K_{<x,y>}\cdots\\
&&\left. \cdots\prod_ {\{x,y\}\in E_{n+2}\setminus E_{n+1}}K_{<x,y>}
\prod_{x\in
W_{n+2}}h^{1/2}_x(a\otimes\id_{W_{n+1}}\otimes\id_{W_{n+2}})\right)\\
&=& \tr \left(\bigg(\prod_ {\{x,y\}\in E_{n+1}\setminus
E_{n}}K_{<x,y>}\bigg)^*\cdots\bigg(\prod_{\{x,y\}\in
E_1}K_{<x,y>}\bigg)^*w_0\right.\\
&& \prod_{\{x,y\}\in E_1}K_{<x,y>}\cdots\prod_ {\{x,y\}\in
E_{n+1}\setminus
E_{n}}K_{<x,y>}(a\otimes\id_{W_{n+1}}) \\
&&\left. \prod_ {\{x,y\}\in E_{n+2}\setminus E_{n+1}}K_{<x,y>}
\prod_{x\in W_{n+2}}h_x\bigg(\prod_ {\{x,y\}\in E_{n+2}\setminus
E_{n+1}}K_{<x,y>}\bigg)^*\right)
\end{eqnarray*}

We know that for different $x$ and $x'$ taken from $W_{n+1}$ the
algebras $\cb_{x\cup S(x)}$ and  $\cb_{x'\cup S(x')}$ commute,
therefore from \eqref{Kn2} one finds
\begin{eqnarray*}
&& \prod_{\{x,y\}\in E_{n+2}\setminus E_{n+1}}K_{<x,y>} \prod_{x \in
W_{n+2}} h_x \bigg(\prod_{\{x,y\} \in E_{n+2}\setminus
E_{n+1}}K_{<x,y>}\bigg)^* =\\&& \prod_{x\in W_{n+1}}\prod_{i=1}^k
K_{<x,(x,i)>} \prod_{i=1}^k h_{(x,i)}
\prod_{i=1}^kK^*_{<x,(x,k+1-i)>}.
\end{eqnarray*}
and
\begin{eqnarray*}
&& \tr_{W_{n+1}]}\left(\prod_{\{x,y\}\in E_{n+2}\setminus
E_{n+1}}K_{<x,y>} \prod_{x \in W_{n+2}} h_x \bigg(\prod_{\{x,y\} \in
E_{n+2}\setminus
E_{n+1}}K_{<x,y>}\bigg)^*\right) \\
&=& \prod_{x\in W_{n+1}}\tr_{\{x\}}\left(\prod_{i=1}^k K_{<x,(x,i)>}
\prod_{i=1}^k h_{(x,i)} \prod_{i=1}^kK^*_{<x,(x,k+1-i)>}\right).
\end{eqnarray*}

Hence from the condition \eqref{eq2} we find
\begin{eqnarray*}
\ffi^{(n+1)}(a\otimes\id_{W_{n+1}})&=& \tr \left(\bigg(\prod_
{\{x,y\}\in E_{n+1}\setminus
E_{n}}K_{<x,y>}\bigg)^*\cdots\bigg(\prod_{\{x,y\}\in
E_1}K_{<x,y>}\bigg)^*w_0\right.\\
&& \prod_{\{x,y\}\in E_1}K_{<x,y>}\cdots\prod_ {\{x,y\}\in
E_{n+1}\setminus
E_{n}}K_{<x,y>}(a\otimes\id_{W_{n+1}}) \\
&&\left. \prod_{x\in W_{n+1}}Tr_{\{x\}}\left(\prod_{i=1}^k
K_{<x,(x,i)>} \prod_{i=1}^k
h_{(x,i)} \prod_{i=1}^kK^*_{<x,(x,k+1-i)>}\right)\right)\\
&=& \tr \left(\bigg(\prod_ {\{x,y\}\in E_{n+1}\setminus
E_{n}}K_{<x,y>}\bigg)^*\cdots\bigg(\prod_{\{x,y\}\in
E_1}K_{<x,y>}\bigg)^*w_0\right.\\
&& \left. \prod_{\{x,y\}\in E_1}K_{<x,y>}\cdots\prod_ {\{x,y\}\in
E_{n+1}\setminus E_{n}}K_{<x,y>}(a\otimes\id_{W_{n+1}})\prod_{x\in
W_{n+1}}h_x\right)\\
&=& \tr \left(\prod_{x\in W_{n+1}}h^{1/2}_x\bigg(\prod_ {\{x,y\}\in
E_{n+1}\setminus
E_{n}}K_{<x,y>}\bigg)^*\cdots\bigg(\prod_{\{x,y\}\in
E_1}K_{<x,y>}\bigg)^*w_0\right.\\
&& \left. \prod_{\{x,y\}\in E_1}K_{<x,y>}\cdots\prod_ {\{x,y\}\in
E_{n+1}\setminus E_{n}}K_{<x,y>}\prod_{x\in
W_{n+1}}h^{1/2}_x(a\otimes\id_{W_{n+1}})\right) \\
&=&
\tr(\cw_{n+1]}(a\otimes\id_{W_{n+1}}))\\
&=&\ffi^{(n)}(a).
\end{eqnarray*}

From \eqref{eq1}, one can show that $\ffi^{(n)}$ is a state, i.e.
$\ffi^{(n)}(\id_{\Lambda_n})=1$.
\end{proof}

Assume that $h_x$ is invertible for all $x \in L$ and define
\begin{eqnarray}\label{Fn}
\ce_{n}(a) &=& \tr_{n]}\bigg(\prod_{x\in W_n} h_x^{-1/2}
\prod_{\{x,y\}\in E_{n+1}\setminus E_{n}}K_{<x,y>}\prod_{x\in
W_{n+1} }h_{x}^{1/2}
a \\
&& \times \prod_{x\in W_{n+1}}h_{x}^{1/2} \bigg(\prod_{\{x,y\}\in
E_{n+1}\setminus E_{n}}K_{<x,y>}\bigg)^* \prod_{x\in W_n}
h_x^{-1/2} \bigg)
\end{eqnarray}
for each $n \ge 0$ and $a \in \cb_{\Lambda_{n+1}}$. Similar to the
above proof, we get that $\ce_{n}$ is a quasi-conditional
expectation with respect to the triple $\cb_{\Lambda_{n-1}}
\subset \cb_{\Lambda_{n}} \subset \cb_{\Lambda_{n+1}}$. One can
see that
\begin{equation}\label{mar1}
\ffi_n (a)=\tr(h_0^{1/2} w_0 h_0^{1/2} \ce_0\circ
\ce_1\circ\cdots\circ \ce_{n-1}\circ \ce_n(a)).
\end{equation}
Therefore, according to Theorem \ref{proj} we can define a
$d$-Markov chain on $\cb_L$ by $\ffi = \lim \ffi_n$ in the weak-$*$
topology. Note that, in classical setting, similar construction were
considered in \cite{Spa}.

If $h_x =h $ and $\kxy =K$, for all $x \in L$ and $\{x,y\} \in E$,
and $w_0$ satisfies the initial condition
\begin{equation}\label{initial}
\tr_{(i)} \left( w_0 \prod_{j=1}^k K_{<0,j>} \prod_{j=1}^k h_j
(\prod_{j=1}^k K_{<0,j>} )^*   \right) = h_i^{1/2} w_0 h_i^{1/2},
\end{equation}
where $K_{<0,j>}$ means $K_{<x_0, (j)>}$,
$\ffi$ is shift-invariant for $\gamma_i$. Indeed, since
(\ref{initial}) means
\[
\tr(h_0^{1/2} w_0 h_0^{1/2} \ce_0 (\, \cdot \,)) =\tr(h_i^{1/2}
w_0 h_i^{1/2} \, \cdot \,)
\]
on $\cb_{i}$, we have
\begin{eqnarray*}
\ffi_n (\gamma^i(a))&=&\tr(h_0^{1/2} w_0 h_0^{1/2} \ce_0\circ
\ce_1\circ\cdots\circ
\ce_{n-1}\circ \ce_n(\gamma^i(a)))\\
&=& \tr(h_i^{1/2} w_0 h_i^{1/2} \ce_1\circ \ce_2 \circ\cdots\circ
\ce_{n-1}\circ \ce_n(\gamma^j(a)))\\
&=& \tr(h_0^{1/2} w_0 h_0^{1/2} \ce_0\circ \ce_1\circ\cdots\circ
\ce_{n-2}\circ \ce_{n-1}(a)) = \ffi(a)
\end{eqnarray*}
for all $a \in \cb_{\Lambda_{n-1}}$. In the third equation, we use
$h_0 = h_i =h$ and $\kxy =K$.



\section{Example of $d$-Markov chain on Cayley tree}\label{exam1}

In this and next sections, we provide more concrete examples of
$d$-Markov chains on Cayley tree. For the sake of simplicity we
consider a semi-infinite Cayley tree $\G^2(x_0)=(L,E)$ of order 2
so that $d=2$.
Our starting $C^{*}$-algebra is the same $\cb_L$ but with
$\cb_{x}=M_{2}(\bc)$ for $x\in L$. By $e_{ij}^{(x)}$ we denote the
standard matrix units of $\cb_{x} = M_2(\bc)$.

For every edge $\{x,y\}\in E$ put
\begin{equation}\label{1Kxy1}
K_{<x,y>}=\exp\{\b H_{<x,y>}\}, \ \ \b\in\br
\end{equation}
where
\begin{equation}\label{1Hxy1}
H_{<x,y>}=e_{12}^{(x)}\otimes e_{21}^{(y)}+e_{21}^{(x)}\otimes
e_{12}^{(y)}.
\end{equation}

Now we are going to find a solution  $\{h_x\}$ and $w_0$ of
equations \eqref{eq1}, \eqref{eq2} for the defined
$\{K_{<x,y>}\}$. Note that from \eqref{1Kxy1},\eqref{1Hxy1} for
every $K_{<x,y>}$ one can see that
\begin{eqnarray}\label{1Kxy*}
&&K_{<x,y>}=K^*_{<x,y>}
\end{eqnarray}
for all $\{x,y\}\in E$.

Assume that $h_x=\a I$ for every $x\in V$. Hence, thanks to
\eqref{1Kxy*}, the equations \eqref{eq1},\eqref{eq2} can be
rewritten as follows
\begin{eqnarray}\label{eq11}
&&\a\tr_{0}( w_0 )= 1 \\
\label{eq21}&&
\a^2\tr_x\bigg(K_{<x,(x,1)>}K^2_{<x,(x,2)>}K_{<x,(x,1)>} \bigg)=\a
I, \ \ \textrm{for every} \ \  x\in L.
\end{eqnarray}

One can see that
\begin{eqnarray}\label{1Hxy2}
&&H_{<x,y>}^{2n}=H_{<x,y>}^2=e_{11}^{(x)}\otimes
e_{22}^{(y)}+e_{22}^{(x)}\otimes e_{11}^{(y)}\\
\label{1Hxy3} &&H_{<x,y>}^{2n-1}=H_{<x,y>}
\end{eqnarray}
for every $n\in\bn$. Then we get
\begin{eqnarray}\label{1Kxy3}
K_{<x,y>}^2 &=& I+(\sinh 2\b)H_{<x,y>}+(\cosh 2\b -1) H_{<x,y>}^2 \\
\tr_x (K_{<x,y>}^2) &=& {\cosh2 \b +1 \over 2} I= \cosh^2 \b  I
\nonumber
\end{eqnarray}
for every $\{x,y\}\in E$. Hence, for $x\in L$ and $y,z \in S(x)$,
one finds
\begin{eqnarray*}
\tr_x\left(K_{<x,y>}K^2_{<x,z>}K_{<x,y>} \right) &=&
\tr_x\left(K_{<x,y>} \tr_{xy}(K^2_{<x,z>})K_{<x,y>} \right)\\
&=& (\cosh^2 \beta)
\tr_x\left(K_{<x,y>}^2 \right) \\
&=& (\cosh^4 \beta) I.
\end{eqnarray*}
Therefore we obtain $\a = \cosh^{-4} \beta$ and $\tr(w_0) =
\cosh^4 \beta$.

Next, consider the initial condition (\ref{initial}).
For convenience, we will write $K_{<0,1>}$ for $K_{<x_0,(1)>}$, for example.
Since
\begin{eqnarray*}
 \tr_1\left(w_0 K_{<0,1>}K^2_{<0,2>}K_{<0,1>} \right)
&=&
\tr_1\left(w_0 K_{<0,1>} \tr_{0,1}(K^2_{<0,2>})K_{<0,1>} \right)\\
&=& (\cosh^2 \beta) \tr_1\left(w_0K_{<0,1>}^2 \right),
\end{eqnarray*}
by putting $w_0 = \sum_{i,j=1,2} a_{ij} e_{ij}^{0}$, thanks to
(\ref{1Kxy3}) we have
\begin{eqnarray*}
&& \tr_1\left(w_0 K_{<0,1>}K^2_{<0,2>}K_{<0,1>} \right) \\
&=& {\cosh^2 \beta\over 2} \left( (a_{11} +a_{22})I + (\cosh 2\b
-1)(a_{11} e_{22}+ a_{22}e_{11}) + (\sinh 2\b)(a_{12}e_{12} +
a_{21}e_{21}) \right).
\end{eqnarray*}
This is equal to $(\cosh^4\beta)w_0$ from (\ref{initial}).
Therefore we have the solution $w_0 =I$. Hence, $\ffi$
generated by the above notations is $\gamma_1$-invariant
$d$-Markov chain. Similarly, it is easily seen that $\ffi$ is also
$\gamma_2$-invariant.

Finally we show the clustering property. Recall that a state $\ffi$
on $\cb_L$ satisfies {\it the clustering property} w.r.t. $\gamma_i$
if and only if
\[
\lim_{n\to\infty} \ffi(\gamma_i^n(a)b) = \ffi(a) \ffi(b)
\].

\begin{theorem}\label{cluster1}
A state $\ffi$ generated by the above notations is $\gamma_1$ and
$\gamma_2$-invariant and satisfies clustering property w.r.t.
$\gamma_i$, $i=1,2$.
\end{theorem}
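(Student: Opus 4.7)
The $\gamma_i$-invariance of $\ffi$ ($i=1,2$) follows directly from the general argument after (\ref{initial}) in Section \ref{dfcayley}, since the example has $h_x = h = \alpha I$ and $K_{<x,y>} = K$ constant and, as verified at the end of the example text, $w_0 = I$ satisfies the initial condition (\ref{initial}) for both $i=1,2$. The substantive content of the theorem is therefore the clustering property. By linearity and weak-$*$ density of $\cb_{L,\mathrm{loc}}$, it suffices to prove
\[
\ffi(\gamma_i^n(a) b) \,\longrightarrow\, \ffi(a)\,\ffi(b)
\]
for each fixed $N \in \bn$ and all $a, b \in \cb_{\Lambda_N}$.

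My plan is to reduce this to a one-dimensional transfer-operator problem along the ray $\emptyset \to (i) \to (i,i) \to \cdots$ through direction $i$. Fix $a, b \in \cb_{\Lambda_N}$ and take $n > 2N$, so that the supports of $b$, the intermediate ray vertices $\{(1^k) : N < k < n-N\}$, and $\gamma_1^n(a)$ are pairwise disjoint. Starting from (\ref{mar1}) and the explicit form of $\ce_n$ in (\ref{Fn}), I iteratively integrate out each \emph{off-ray} subtree $T_y$, i.e.\ a subtree hanging off an intermediate ray vertex and containing neither the support of $a$ nor of $b$. The crucial observation is that, with $h = \alpha I$ and $w_0 = I$, the identity (\ref{eq2}) applied inductively on $\mathrm{depth}(T_y)$ implies that the local partial trace over $T_y$ collapses to a scalar multiple of the identity on the parent site: the base case is $\tr_y(h_y K_{<x,y>}^2) = \alpha\cosh^2\beta \cdot I_x$ from (\ref{1Kxy3}), and the inductive step is (\ref{eq2}) applied at the root of $T_y$. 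After all such collapses, $\ffi(\gamma_1^n(a)b)$ reduces to the form
\[
\ffi(\gamma_1^n(a)b) \;=\; c_n \cdot \tr\bigl(\xi_b \, T^{\,n-2N}(\xi_a)\bigr),
\]
where $T: M_2(\bc) \to M_2(\bc)$ is a fixed completely positive unital transfer operator depending only on $K$ and $\alpha$, $\xi_a, \xi_b \in M_2(\bc)$ are the $2 \times 2$ endpoint matrices obtained by collapsing $a, b$ against their own local surroundings, and $c_n$ is an explicit normalization.

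The main obstacle is the spectral analysis of $T$. Using the explicit expressions for $K$, $K^2$, $H$, $H^2$ in (\ref{1Kxy3})--(\ref{1Hxy3}), $T$ admits a closed-form matrix representation in an appropriate basis of $M_2(\bc)$. Unitality $T(I) = I$ is forced by (\ref{eq2}), so $1$ is an eigenvalue with eigenvector $I$; a direct $3 \times 3$ computation then expresses the remaining three eigenvalues as explicit hyperbolic functions of $\beta$ whose modulus is strictly less than $1$ for every $\beta \neq 0$ (the case $\beta = 0$ is trivial since then $K = I$ and $\ffi$ is the tracial product state). Consequently $T^m$ converges in operator norm, at exponential rate, to the rank-one map $X \mapsto \tr(X) \cdot I$, giving
\[
\ffi(\gamma_1^n(a)b) \;\longrightarrow\; \tr(\xi_a)\,\tr(\xi_b) \;=\; \ffi(a)\,\ffi(b),
\]
the final equality obtained by matching constants with the same collapse procedure applied with $a$ or $b$ replaced by $I$. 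The argument for $\gamma_2$ is identical, by the $1 \leftrightarrow 2$ symmetry of the construction. The hard part is the spectral-gap verification for $T$; everything else is bookkeeping of the collapse procedure based on the established identities.
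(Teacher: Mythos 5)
Your proposal is correct and follows essentially the same route as the paper: invariance via the general criterion (\ref{initial}) with $h=\alpha I$ and $w_0=I$, and clustering via the one-step transfer operator along the ray through direction $i$, whose contraction the paper establishes by computing $\tr_x(\kxy\kxz e_{jl}^{(y)}\kxz\kxy)$ on the four matrix units (the diagonal units are sent to multiples of $I$ in a single step, the off-diagonal units pick up a factor of modulus strictly less than $1$ after normalization by $\alpha=\cosh^{-4}\beta$). The only difference is packaging: you phrase the contraction as a spectral gap of the transfer map on the traceless part and treat general local $a,b\in\cb_{\Lambda_N}$, whereas the paper reduces to $a,b\in\cb_{x_0}$ and verifies the same facts by direct calculation.
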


\begin{proof}
The first assertion is already proven in above.

To show the clustering property, it is enough to prove for any $a
\in {\cal B}_0 = M_2({\mathbb C})$
\begin{eqnarray*}
\lim_{n\to\infty}  \ce_0\circ \ce_1 \circ \cdots \circ \ce_{n-1}
\circ \ce_n (\gamma_1^{n+1}(a)) = \ffi(a) I.
\end{eqnarray*}
Indeed, for $a,b \in \cb_0$, we have
\begin{eqnarray*}
\lim_{n\to \infty} \ffi(\gamma_1^{n}(a) b) &=& \lim_{n\to\infty}
\tr\left( h_0^{1/2} w_0 h_0^{1/2} \ce_0( \ce_1 \circ \cdots \circ
\ce_{n-1} \circ \ce_n (\gamma_1^{n+1}(a))b)\right) \\
&=&\ffi(a)  \tr\left( h_0^{1/2} w_0 h_0^{1/2} \ce_0(b)\right) =
\ffi(a) \ffi(b).
\end{eqnarray*}

Assume $\gamma_1^{n+1}(a) \in \cb_{y}$ and $y,z \in S(x)$, then
essentially, we can restrict $\ce_n$ to $\ce_n | \cb_{x,y,z}$. From
a simple calculation, we have
\begin{eqnarray*}
\tr_x (\kxy \kxz e_{11}^{(y)} \kxz \kxy)&=&
\tr_x(\kxy e_{11}^{(y)} \tr_{xy} (\kxz^2) \kxy) \\
&=& (\cosh^2 \beta)\tr_x(\kxy e_{11}^{(y)}  \kxy) \\
&=& {\cosh^4 \beta \over 2} I.
\end{eqnarray*}
Similarly, we get
\begin{eqnarray*}
\tr_{x} (\kxy \kxz e_{22}^{(y)} \kxz \kxy) &=&{\cosh^4 \beta \over 2} I, \\
\tr_{x} (\kxy \kxz e_{12}^{(y)} \kxz \kxy) &=&\cosh^2 \beta \si e_{12}^{(x)}, \\
\tr_{x} (\kxy \kxz e_{21}^{(y)} \kxz \kxy) &=& \cosh^2 \beta \si
e_{21}^{(x)}.
\end{eqnarray*}
Therefore, we obtain that
\begin{eqnarray*}
\lim_{n\to\infty} \ce_0\circ \ce_1 \circ \cdots \circ \ce_{n-1}
\circ \ce_n (\gamma_1^{n+1}(a)) =\tr(a) = \ffi(a) I
\end{eqnarray*}
which implies the assertion.

Similarly, one can prove that $\ffi$ satisfies clustering property
w.r.t. $\gamma_2$.
\end{proof}


\section{Another example of $d$-Markov chain on Cayley tree}\label{exam2}

Now consider the next example. For every edge $\{x,y\} \in E$ put
\[
K_{<x,y>}=\exp\{\b P_{<x,y>}\}, \ \ \b\in\bc
\]
where
\[
P_{<x,y>}=e_{11}^{(x)}\otimes e_{11}^{(y)}+e_{22}^{(x)}\otimes
e_{22}^{(y)}.
\]
Explicitly, we can write
\begin{equation}\label{eq10.1}
K_{<x,y>} = I + (e^\b -1)  P_{<x,y>}.
\end{equation}

Now we are going to find a solution  $\{h_x\}$ and $w_0$ of
equations \eqref{eq1}, \eqref{eq2} for the defined
$\{K_{<x,y>}\}$. Note that for every $K_{<x,y>}$ and $\kxz$, one
can see that
\begin{eqnarray*}
&&K_{<x,y>}=K^*_{<x,y>}, \\
&&\kxy \kxz = \kxz \kxy.
\end{eqnarray*}

Assume that $h_x=\a I$ for every $x\in V$. Hence, thanks to the
above equations, the equations \eqref{eq1},\eqref{eq2} can be
rewritten as follows
\begin{eqnarray}\label{eq112}
&&\a\tr_{0}( w_0 )= 1 \\
\label{eq212}&& \a^2\tr_x\bigg(K_{<x,(x,1)>}^2 K^2_{<x,(x,2)>}
\bigg)=\a I, \ \ \textrm{for every} \ \  x\in L.
\end{eqnarray}
From $\tr_{xy} (\kxz^2) = {e^{2\b} +1 \over 2} I$, we have
\[
\tr_x\left (K_{<x,(x,1)>}^2 K^2_{<x,(x,2)>} \right) = {(e^{2\b}
+1)^2 \over 4} I.
\]
Hence we obtain
\[
\alpha = { 4 \over (e^{2\b} +1)^2 }
\]
and $\tr (w_0)  = {(e^{2\b} +1)^2 / 4}$.

Next, consider the initial condition (\ref{initial}).
Since
\begin{eqnarray*}
 \tr_1\left(w_0 K^2_{<0,1>}K^2_{<0,2>} \right)
&=&
\tr_1\left(w_0 K^2_{<0,1>} \tr_{0,1}(K^2_{<0,2>}) \right)\\
&=& {e^{2\b} +1 \over 2} \tr_1\left(w_0K_{<0,1>}^2 \right),
\end{eqnarray*}
by putting $w_0 = \sum_{i,j=1,2} a_{ij} e_{ij}^{0}$, thanks to
(\ref{eq10.1}) we have
\begin{eqnarray*}
&& \tr_1\left(w_0 K_{<0,1>}^2 K^2_{<0,2>} \right) \\
&=&
 {e^{2\b} +1 \over 4}
\left( (e^{2\b} a_{11} + a_{22})e_{11} +(a_{11} e^{2\b} + a_{22})
e_{22}) \right)
\end{eqnarray*}
This is equal to ${(e^{2\b} +1)^2 \over 4} w_0$ from
(\ref{initial}). Threrfore we have the solution $w_0 =I$.
Therefore, $\ffi$ generated by the above notations is
$\gamma_1$-invariant $d$-Markov chain. Similarly, it is easily seen
that $\ffi$ is also $\gamma_2$-invariant.

Finally, we show the clustering property.

\begin{theorem}
The above $\ffi$ is $\gamma_1$ and $\gamma_2$-invariant and
satisfies clustering property w.r.t. $\gamma_i$, $i=1,2$.
\end{theorem}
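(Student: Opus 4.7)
The $\gamma_i$-invariance for $i=1,2$ was already established in the paragraph preceding the theorem, via the initial condition (\ref{initial}) and the symmetry of the construction in the successor indices $i=1,2$. I would only cite this and concentrate on clustering.

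My plan is to mimic the proof of Theorem \ref{cluster1}: the clustering property reduces, by the same short trace manipulation as in the first display of that proof, to showing
\[
\lim_{n\to\infty} \ce_0 \circ \ce_1 \circ \cdots \circ \ce_n(\gamma_i^{n+1}(a)) = \ffi(a)\, I \qquad (a\in\cb_0\cong M_2(\bc)).
\]

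The crucial single-step computation is actually cleaner than in Section \ref{exam1}, because $\kxy = I+(e^\b-1)P_{\langle x,y\rangle}$ is diagonal in the computational basis: it is self-adjoint, commutes with $\kxz$, and commutes with the diagonal matrix units $e_{11}^{(y)},e_{22}^{(y)}$. For $x\in W_n$, $S(x)=\{y,z\}$, and $P^2=P$, a direct matrix-unit calculation yields
\[
\tr_y(\kxy^2 e_{11}^{(y)}) = \tfrac{e^{2\b}}{2}e_{11}^{(x)}+\tfrac{1}{2}e_{22}^{(x)},\qquad \tr_y(\kxy^2 e_{22}^{(y)}) = \tfrac{1}{2}e_{11}^{(x)}+\tfrac{e^{2\b}}{2}e_{22}^{(x)},
\]
\[
\tr_y(\kxy^2 e_{12}^{(y)}) = \tr_y(\kxy^2 e_{21}^{(y)}) = 0,\qquad \tr_z(\kxz^2) = \tfrac{e^{2\b}+1}{2}\,I.
\]
Combining the scalar $(e^{2\b}+1)/2$ from the $z$-branch with the $h$-factors $\alpha=4/(e^{2\b}+1)^2$ in (\ref{Fn}) produces a linear map $T:\cb_y\to\cb_x$ which kills off-diagonal matrix units and, on $\mathrm{span}\{e_{11},e_{22}\}$, is represented (with $T(I)=I$) by the symmetric stochastic matrix
\[
\frac{1}{e^{2\b}+1}\begin{pmatrix} e^{2\b} & 1 \\ 1 & e^{2\b}\end{pmatrix}.
\]
Since $\gamma_i^{n+1}(a)$ is supported at the single vertex $\gamma_i^{n+1}(x_0)\in W_{n+1}$ while every other site of $W_{n+1}$ carries the identity, all off-path branches contribute only the partial-trace normalizations already absorbed by (\ref{eq2}); hence $\ce_0\circ\cdots\circ\ce_n(\gamma_i^{n+1}(a)) = T^{n+1}(a)$. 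The eigenvalues of $T$ on $\mathrm{span}\{e_{11},e_{22}\}$ are $1$ and $\lambda=(e^{2\b}-1)/(e^{2\b}+1)$, with $|\lambda|<1$ for $\b\in\br\setminus\{0\}$, so $T^{n+1}(a)\to \ffi(a)I$; the identification of the stationary value with $\ffi(a)$ follows because $\ffi\lceil_{\cb_0}$ is the tracial state (a consequence of $\gamma_i$-invariance together with $w_0=I$ and $h_x=\alpha I$).

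The main obstacle I foresee is bookkeeping the normalization constants: the factor $\alpha$, the partial traces over $W_{n+1}\setminus\{y\}$, and the branching factor $k=2$ must conspire so that $T(I)=I$ exactly with no leftover scalar, and so that the collapse of off-path branches is clean. Once this is verified, clustering follows by repeating verbatim the final short display of the proof of Theorem \ref{cluster1}.
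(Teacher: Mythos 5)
Your proposal is correct and follows essentially the same route as the paper: reduce clustering to showing $\ce_0\circ\cdots\circ\ce_n(\gamma_i^{n+1}(a))\to\ffi(a)I$, compute the one-step partial traces of $\kxy\kxz e_{ij}^{(z)}\kxz\kxy$ on matrix units (your values agree with the paper's after multiplying by the scalar $(e^{2\b}+1)/2$ coming from the untouched branch), and observe that off-diagonal units are annihilated while the diagonal part evolves by a stochastic matrix. The only difference is that you make explicit the transfer-matrix eigenvalue $\tanh\b$ and the geometric convergence, which the paper compresses into the phrase ``as in the classical Markov chain case.''
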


\begin{proof}
The first assertion is already proven in above.

This proof is similar to Theorem \ref{cluster1}, and we need to
show
\begin{eqnarray*}
\lim_{n\to\infty}  \ce_0\circ \ce_1 \circ \cdots \circ \ce_{n-1}
\circ \ce_n (\gamma_1^{n+1}(a)) = \ffi(a) I.
\end{eqnarray*}
for $a \in \cb_0$. To see this, we make following lists for $x\in
L$ and $y,z \in S(x)$:
\begin{eqnarray*}
\tr_{x} (\kxy \kxz e_{11}^{(z)} \kxz \kxy) &=&
 {e^{2\beta}(e^{2\beta}+1) \over 4} e_{11}^{(x)} + {e^{2\beta}+1 \over 4}e_{22}^{(x)}, \\
\tr_{x} (\kxy \kxz e_{22}^{(z)} \kxz \kxy) &=&
 {e^{2\beta}+1 \over 4}e_{11}^{(x)} +
 {e^{2\beta}(e^{2\beta}+1) \over 4} e_{22}^{(x)}. \\
\tr_{x} (\kxy \kxz e_{12}^{(z)} \kxz \kxy) &=& 0 \\
\tr_{x} (\kxy \kxz e_{21}^{(z)} \kxz \kxy) &=& 0.
\end{eqnarray*}
As in the classical Markov chain case, we can prove that
\begin{eqnarray*}
\lim_{n\to\infty}  \ce_0\circ \ce_1 \circ \cdots \circ \ce_{n-1}
\circ \ce_n (\gamma_1^{n+1}(a)) = \tr(a) = \ffi(a) I.
\end{eqnarray*}
which proves the theorem.

Similarly, we can prove that $\ffi$ satisfies clustering property
w.r.t. $\gamma_2$.
\end{proof}

\section{Conclusions}

Let us note that a first attempt of consideration of quantum
Markov fields began in \cite{[AcFi01a], [AcFi01b]} for the regular
lattices (namely for $\mathbb{Z})$. But there, concrete examples
of such fields were not given. In the present paper we have
extended a notion of generalized quantum Markov states to fields,
i.e. to graphs with an hierarchy property. Here such states have
been considered on discrete infinite tensor products of
$C^*$--algebras over trees. A tree structure of graphs allowed us
to give a construction an entangled Markov field, which
generalizes the construction of \cite{[AcFi03]} to trees. It has
been shown that such states are d-Markov chains and, in special
cases, they are generalized quantum Markov states.

As well as, we have considered a particular case of tree, so
called Cayley tree. Over such a tree we gave a construction of
$d$-Markov chains, and some more concrete examples of such chains
were provided, which are shift invariant and have the clustering
property. Note that  $d$-Markov chains describe ground states of
quantum systems over trees. Certain particular examples of such
systems were considered in \cite{aklt},\cite{fannes}. As well as,
such shift invariant $d$-Markov chains can be  also considered as
an extension of $C^*$-finitely correlated states defined in
\cite{fannes2} to the Cayley trees.


\section*{Acknowledgement} The second named author (H.O.) is
partially supported by Grant-in-Aid for JSPS Fellows 19$\cdot$2166.
The third named author (F.M.) thanks the grant FRGS0308-91 of MOHE.
Finally, the authors also would like to thank to a referee for his
useful suggestions which allowed us to improve the content of the
paper.


\begin{thebibliography}{19}

\bibitem{[Ac74f]}
Accardi L.: On the noncommutative Markov property, \emph{Func. Anal.
Appl.}, {\bf 9} (1975) 1--8.

\bibitem{[AcFi03]}
Accardi L., Fidaleo F.: Entangled Markov chains. \textit{Annali di
Matematica Pura e Applicata}, {\bf 184}(2005), 327--346.


\bibitem{[AcFi01a]}
Accardi L., Fidaleo F.: Quantum Markov fields, \textit{Inf. Dim.
Analysis, Quantum Probab. Related Topics} {\bf 6} (2003) 123--138.

\bibitem{[AcFi01b]}
Accardi L., Fidaleo F.: On the structure of quantum Markov fields,
Proceedings Burg Conference 15--20 March 2001, W. Freudenberg
(ed.), World Scientific, QP--PQ Series 15 (2003) 1--20

\bibitem{[AcFiMu07]}
Accardi L., Fidaleo F. Mukhamedov, F.: Markov states and chains on
the CAR algebra, \textit{Inf. Dim. Analysis, Quantum Probab. Related
Topics} {\bf 10} (2007), 165--183.


\bibitem{[AcFr80]}
Accardi L., Frigerio A.: Markovian cocycles, \emph{Proc. Royal Irish
Acad.} {\bf 83A} (1983) 251-263.

\bibitem{AcLi}
Accardi L., Liebscher V.: Markovian KMS-states for one-dimensional
spin chains, \emph{Infin. Dimens. Anal. Quantum Probab. Relat.
Top.,} {\bf 2}(1999) 645-661.

\bibitem{[AcMaOh]}
Accardi L., Matsuoka T., Ohya M.: Entangled Markov chains are indeed
entangled, \textit{Inf. Dim. Analysis, Quantum Probab. Related
Topics} {\bf 9} (2006), 379--390.


\bibitem{aklt} Affleck L, Kennedy E., Lieb E.H., Tasaki H.:
Valence bond ground states in isortopic quantum antiferromagnets,
\emph{Commun. Math. Phys.} {\bf 115} (1988), 477--528.


\bibitem{[Dobr68a]}
Dobrushin R.L.: Description of Gibbsian Random Fields by means of
conditional probabilities, \emph{Probability Theory and
Applications} {\bf 13}(1968) 201--229

\bibitem{fannes}
Fannes M., Nachtergaele B. Werner R. F.: Ground states of VBS models
on Cayley trees, \emph{J. Stat. Phys.} {\bf 66} (1992) 939--973.

\bibitem{fannes2}
Fannes M., Nachtergaele B. Werner R. F.: Finitely correlated states
on quantum spin chains, \emph{Commun. Math. Phys.} {\bf 144} (1992)
443--490.


\bibitem{[Liebs99]}
Liebscher V.: Markovianity of quantum random fields, Proceedings
Burg Conference 15--20 March 2001, W. Freudenberg (ed.), World
Scientific, QP--PQ Series 15 (2003) 151--159

\bibitem{[Miyad05]}
Miyadera T.:
 Entangled Markov Chains generated by symmetric channels,
\textit{Inf. Dim. Analysis, Quantum Probab. Related Topics} {\bf 8}
(2005) 497--504.

\bibitem{[Ohno03]}
Ohno H.: Extendability of generalized quantum Markov chains on gauge
invariant $C^*$--algebras, \textit{Inf. Dim. Analysis, Quantum
Probab. Related Topics},  {\bf 8}(2005) 141--152.

\bibitem{[Pr]} Preston C.:
Gibbs States on Countable Sets (Cambridge University Press, London,
1974).

\bibitem{Spa} Spataru A.: Construction of a
Markov Field on an infinite Tree, \emph{Advances in Math} {\bf
81}(1990), 105--116.

\bibitem{[Sp75]}
Spitzer F.: Markov random Fields on an infinite tree, \emph{Ann.
Prob.} {\bf 3} (1975) 387-398.

\bibitem{[Za83]}
Zachary S.: Countable state space Markov random fields and Markov
chains on trees, \emph{Ann. Prob.} {\bf 11} (1983) 894--903.

\bibitem{[Za85]}
Zachary S.: Bounded attractive and repulsive Markov specifications
on trees and on the one-dimensional lattice, \emph{Stochastic
Process. Appl.} {\bf 20} (1985) 247--256.


\end{thebibliography}
\end{document}